\DeclareMathAlphabet{\mathpzc}{OT1}{pzc}{m}{it}
\def\cC{\mathscr{C}}
\def\cD{\mathscr{D}}
\def\cM{\mathscr{M}}
\def\cT{\mathscr{T}}
\def\cU{\mathscr{U}}
\def\cX{\mathscr{X}}
\def\BZ{\mathbb{Z}}
\def\add{\operatorname{add}}
\def\adots{\mathinner{\mkern1mu\raise1.0pt\vbox{\kern7.0pt\hbox{.}}\mkern2mu\raise4.0pt\hbox{.}\mkern2mu\raise7.0pt\hbox{.}\mkern1mu}}
\def\Coker{\operatorname{Coker}}
\def\dddots{\mathinner{\mkern1mu\raise10.0pt\vbox{\kern7.0pt\hbox{.}}\mkern2mu\raise5.3pt\hbox{.}\mkern2mu\raise1.0pt\hbox{.}\mkern1mu}}
\def\dddotssmall{\mathinner{\mkern1mu\raise7.0pt\vbox{\kern7.0pt\hbox{.}}\mkern-1mu\raise4pt\hbox{.}\mkern-1mu\raise1.0pt\hbox{.}\mkern1mu}}
\def\Db{\cD^{\operatorname{b}}}
\def\dual{\operatorname{D}}
\def\End{\operatorname{End}}
\def\Ext{\operatorname{Ext}}
\def\Fac{\operatorname{Fac}}
\def\gldim{\operatorname{gldim}}
\def\H{\operatorname{H}}
\def\Hom{\operatorname{Hom}}
\def\id{\operatorname{id}}
\def\Ker{\operatorname{Ker}}
\def\mod{\mathsf{mod}}
\def\opp{\operatorname{op}}
\def\SL2{\operatorname{SL}_2}
\def\Tr{\operatorname{Tr}}
\def\Uexact{\cU\mbox{\rm -exact}}
\def\Xexact{\cX\mbox{\rm -exact}}
\numberwithin{equation}{section}
\newtheorem{Lemma}{Lemma}[section]
\newtheorem{Theorem}[Lemma]{Theorem}
\newtheorem{Proposition}[Lemma]{Proposition}
\theoremstyle{definition}
\newtheorem{Definition}[Lemma]{Definition}
\newtheorem{Setup}[Lemma]{Setup}
\newtheorem{Remark}[Lemma]{Remark}
\newtheorem{Example}[Lemma]{Example}
\newtheorem*{bfhpg*}{}
\begin{document}

\setlength{\parindent}{0pt}
\setlength{\parskip}{7pt}

\title[Higher torsion]{Torsion classes and $\rm
t$-structures in higher homological algebra}

\author{Peter J\o rgensen}
\address{School of Mathematics and Statistics,
Newcastle University, Newcastle upon Tyne NE1 7RU, United Kingdom}
\email{peter.jorgensen@ncl.ac.uk}
\urladdr{http://www.staff.ncl.ac.uk/peter.jorgensen}


\keywords{Aisle, cluster tilting module, cluster tilting subcategory,
  intermediate t-structure, $n$-abelian category, $n$-angulated
  category, $n$-exact sequence, Wakamatsu's Lemma, weakly
  $n$-representation finite algebra}

\subjclass[2010]{16G10, 16S90, 18E10, 18E30, 18E40}


\begin{abstract} 

Higher homological algebra was introduced by Iyama.  It is also known
as $n$-homological algebra where $n \geqslant 2$ is a fixed integer,
and it deals with $n$-cluster tilting subcategories of abelian
categories.

\medskip
\noindent
All short exact sequences in such a subcategory are split, but it has
nice exact sequences with $n+2$ objects.  This was recently formalised
by Jasso in the theory of $n$-abelian categories.  There is also a
derived version of $n$-homological algebra, formalised by Geiss,
Keller, and Oppermann in the theory of $( n+2 )$-angulated categories
(the reason for the shift from $n$ to $n+2$ is that angulated
categories have {\em tri}an\-gu\-la\-ted categories as the ``base
case''). 

\medskip
\noindent
We introduce torsion classes and t-structures into the theory of
$n$-abelian and $( n+2 )$-angulated categories, and prove several
results to motivate the definitions.  Most of the results concern the
$n$-abelian and $( n+2 )$-angulated categories $\cM( \Lambda )$ and
$\cC( \Lambda )$ associated to an $n$-representation finite algebra
$\Lambda$, as defined by Iyama and Oppermann.  We characterise torsion
classes in these categories in terms of closure under higher
extensions, and give a bijection between torsion classes in $\cM(
\Lambda )$ and intermediate t-structures in $\cC( \Lambda )$ which is
a category one can reasonably view as the $n$-derived category of
$\cM( \Lambda )$.  We hint at the link to $n$-homological tilting
theory.

\end{abstract}

\maketitle

\setcounter{section}{-1}
\section{Introduction}
\label{sec:introduction}

Higher homological algebra was introduced and developed by Iyama in
\cite{I2}, \cite{I1}, \cite{I3}.  It is also known as $n$-homological
algebra where $n \geqslant 2$ is an integer.

This paper introduces torsion classes and t-structures into
$n$-homological algebra.  The t-structures will occur in the form of
so-called aisles.  We will prove several results to motivate the
definitions.  The results are mainly foundational, only hinting at the
link to $n$-homological tilting theory which was initiated in
\cite{IO} and \cite{OT}.  However, as this theory is developed
further, we expect that, like classic tilting, it will turn out to be
closely linked to torsion classes.

$n$-homological algebra concerns so-called $n$-cluster tilting
subcategories of abelian categories, see \cite[def.\ 1.1]{I1} or
Definition \ref{def:cluster_tilting}.  All short exact sequences in
such a subcategory are split, but it has well-behaved exact sequences
with $n+2$ objects, also known as $n$-exact sequences.  There is also
a derived version of the theory focusing on $n$-cluster tilting
subcategories of triangulated categories as introduced in \cite[sec.\
5.1]{KR}.  There are rich examples of $n$-cluster tilting
sub\-ca\-te\-go\-ri\-es, many with interesting links to combinatorics
and geometry, see \cite{HI1}, \cite{HI2}, \cite{IO}, \cite{OT}.  The
abelian version of $n$-homological algebra was recently formalised to
the theory of $n$-abelian categories by Jasso in \cite[def.\ 3.1]{J},
see Definition \ref{def:n-abelian}.  In such a category, each morphism
$x \rightarrow y$ has not a kernel, but an $n$-kernel, that is, there
is a sequence $m_n \rightarrow \cdots \rightarrow m_1
\rightarrow x \rightarrow y$ with certain properties.  Similarly, $x
\rightarrow y$ has an $n$-cokernel and a list of axioms are satisfied.
The derived version of $n$-homological algebra was formalised to the
theory of $( n+2 )$-angulated categories by Geiss, Keller, and
Oppermann in \cite[def.\ 2.1]{GKO}, see Definition
\ref{def:n-angulated}.

Let $n \geqslant 1$ be a fixed integer for the rest of the paper (the
case $n = 1$ is included since $1$-homological algebra makes sense and
reduces to classic homological algebra).  The following is our main
definition.  The $n$-suspension functor of an $( n+2 )$-angulated
category is denoted by $\Sigma^n$ but is not required actually to be
an $n^{\rm th}$ power.

\begin{Definition}
[Torsion classes]
\label{def:torsion}
Let $\cM$ be an $n$-abelian category.  A full subcategory $\cU
\subseteq \cM$ is called a {\em torsion class} if, for each $m \in
\cM$, there is an $n$-exact sequence
\[
  0 \rightarrow u \stackrel{ \theta }{ \rightarrow } m
    \rightarrow v^1 \rightarrow \cdots \rightarrow v^n \rightarrow 0
\]
where $u \in \cU$ and $v^1 \rightarrow \cdots \rightarrow v^n$ is in
the class $\Uexact$ defined in Equation \eqref{equ:exact} below.

Let $\cC$ be an $( n+2 )$-angulated category.  A full subcategory $\cX
\subseteq \cC$ is called a {\em torsion class} if it is
closed under sums and summands and, for each $c \in \cC$, there is a
$( n+2 )$-angle
\[
  x \stackrel{ \xi }{ \rightarrow } c
    \rightarrow y^1 \rightarrow \cdots \rightarrow y^n
    \rightarrow \Sigma^n x
\]
where $x \in \cX$ and $y^1 \rightarrow \cdots \rightarrow y^n$ is in
the class $\Xexact$. 
\hfill $\Box$
\end{Definition}

Here $\Uexact$ is defined by
\begin{equation}
\label{equ:exact}
  \Uexact
  = \Biggl\{
      \begin{array}{ll}
        v^1 \rightarrow \cdots \rightarrow v^n \mbox{ is } \\
        \mbox{a complex in } \cM
      \end{array}
    \Bigg|
      \begin{array}{ll}
        0
        \rightarrow \cM( u,v^1 )
        \rightarrow \cdots
        \rightarrow \cM( u,v^n )
        \rightarrow 0 \\
        \mbox{is exact for each } u \in \cU
      \end{array}
    \Biggr\}
\end{equation}
and $\Xexact$ is defined similarly.  Let us make a few observations:
Definition \ref{def:torsion} implies that $\cU$ is closed under sums
and summands, see Lemma \ref{lem:precovers}(iii).  While $\Uexact$ and
$\Xexact$ depend on $n$, we omit $n$ from the notation since it is
fixed throughout the paper.  If $n = 1$ then $\Uexact = \cU^{ \perp }$
so the definition specialises to the usual definitions of torsion
classes in abelian and triangulated categories, see \cite[def.\ 1.1
and prop.\ 1.5]{ASS} and \cite[def.\ 2.2]{IY}.  We will prove a number
of results to motivate Definition \ref{def:torsion}.  The easiest to
state is the following immediate consequence of Lemma
\ref{lem:Wakamatsu} below.

\begin{Theorem}
\label{thm:n-angulated_torsion2}
Let $\cC$ be an $( n+2 )$-angulated category, $\cX \subseteq \cC$ a
full subcategory closed under sums and summands which is covering and
left closed under $n$-extensions.  Then $\cX$ is a torsion class.
\end{Theorem}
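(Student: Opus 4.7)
The plan is to derive the theorem directly from the higher version of Wakamatsu's Lemma, namely Lemma \ref{lem:Wakamatsu}, which the paper cites as supplying an immediate proof. Fix $c \in \cC$. The covering hypothesis on $\cX$ produces an $\cX$-cover $\xi \colon x \to c$; the $(n+2)$-angulated structure then extends $\xi$ to an $(n+2)$-angle
$$x \stackrel{\xi}{\to} c \to y^1 \to \cdots \to y^n \to \Sigma^n x.$$
Since $\cX$ is by assumption closed under sums and summands, the only condition of Definition \ref{def:torsion} left to verify is that $y^1 \to \cdots \to y^n$ belongs to $\Xexact$.

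To check this, I would fix $x' \in \cX$ and apply $\cC(x',-)$ to the $(n+2)$-angle above. By the axioms of an $(n+2)$-angulated category this yields a long exact sequence, in which exactness at each of the interior terms $\cC(x', y^i)$ is automatic. Exactness of
$$0 \to \cC(x', y^1) \to \cdots \to \cC(x', y^n) \to 0$$
therefore reduces to two boundary statements: (a) the map $\cC(x',\xi) \colon \cC(x',x) \to \cC(x',c)$ is surjective, and (b) the connecting map $\cC(x', y^n) \to \cC(x', \Sigma^n x)$ is zero. Condition (a) is immediate from the precover property built into the covering hypothesis on $\cX$. Condition (b) is the content of the higher Wakamatsu statement, which will need to exploit both the minimality of the cover $\xi$ and the left closure of $\cX$ under $n$-extensions.

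The main obstacle, and the only non-formal step, is establishing (b); everything else is bookkeeping in the long exact sequence plus the definition of a cover. The argument I expect Lemma \ref{lem:Wakamatsu} to package is as follows: given $f \colon x' \to y^n$, realise the composite $x' \to y^n \to \Sigma^n x$ as the connecting morphism of a second $(n+2)$-angle, rotate, and produce a morphism of $(n+2)$-angles whose leftmost component is an endomorphism of $x$. Left closure under $n$-extensions forces a critical auxiliary object in the comparison $(n+2)$-angle to lie in $\cX$, and minimality of the $\cX$-cover $\xi$ then forces that endomorphism of $x$ to be an isomorphism, which in turn gives the vanishing of the composite. Once this is isolated as Lemma \ref{lem:Wakamatsu}, Theorem \ref{thm:n-angulated_torsion2} follows immediately from the above reduction.
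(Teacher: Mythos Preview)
Your proposal is correct and matches the paper's approach exactly: the paper derives Theorem~\ref{thm:n-angulated_torsion2} as an immediate consequence of Lemma~\ref{lem:Wakamatsu}, with the reduction to conditions (a) and (b) appearing as Lemma~\ref{lem:precovers}(ii), and the proof of Wakamatsu's Lemma itself follows precisely the outline you sketched. The only imprecision is cosmetic: in the comparison of $(n+2)$-angles the leftmost component is actually the identity on $x$, and the relevant endomorphism of $x$ arises afterwards by factoring the second vertical map through the cover $\xi$.
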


For $\cX$ to be {\em left closed under $n$-extensions} means that, for
each morphism $x'' \stackrel{ \delta }{ \rightarrow } \Sigma^n x'$
with $x', x'' \in \cX$, there is an $( n+2 )$-angle $x' \rightarrow
c^1 \rightarrow c^2 \rightarrow \cdots \rightarrow c^n \rightarrow x''
\stackrel{ \delta }{ \rightarrow } \Sigma^n x'$ with $c^1 \in \cX$.
Note that this is strictly weaker than the condition that there is an
$( n+2 )$-angle with each $c^i$ in $\cX$; see Example
\ref{exa:extension_closed}.  The theorem is an $n$-homological version
of one implication in Iyama and Yoshino's characterisation of torsion
classes in triangulated categories, see \cite[prop.\ 2.3(1)]{IY}.  In
a more special setting, we are able to give a necessary and sufficient
condition for $\cX$ to be a torsion class.  Let $\Lambda$ be a weakly
$n$-representation finite algebra with $\gldim( \Lambda ) \leqslant n$
and let $\cC( \Lambda )$ be the associated $(n+2)$-angulated category,
see \cite[def.\ 2.2]{IO}, \cite[thms.\ 1.6 and 1.21]{I1}, \cite[thm.\
1]{GKO} or Setup \ref{set:blanket}.

\begin{Theorem}
[=Theorem \ref{thm:n-angulated_torsion}]
\label{thm:A}
Let $\cX \subseteq \cC( \Lambda )$ be a full subcategory closed under
sums and summands.  Then
$\cX$ is a torsion class in $\cC( \Lambda )$
$\Leftrightarrow$
$\cX$ is left closed under $n$-extensions.
\end{Theorem}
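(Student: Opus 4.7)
The plan is to prove the two directions separately. Direction $(\Leftarrow)$ will follow from Theorem \ref{thm:n-angulated_torsion2} once the covering hypothesis is verified; direction $(\Rightarrow)$ is the substantive one, and I expect it to require a higher octahedral argument splicing a torsion decomposition into a given $(n+2)$-angle.

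For $(\Leftarrow)$ I would appeal to Theorem \ref{thm:n-angulated_torsion2}. The only hypothesis not immediate is that $\cX$ is covering. But $\Lambda$ is weakly $n$-representation finite, so $\cC( \Lambda )$ is Hom-finite and Krull--Schmidt with only finitely many indecomposable objects up to isomorphism. Hence for any $c \in \cC( \Lambda )$ one may take bases of the finite-dimensional Hom spaces $\cC( \Lambda )( x_i, c )$ as $x_i$ ranges over the finitely many indecomposables of $\cX$, and assemble them into a finite direct sum $\bigoplus x_i^{k_i} \rightarrow c$ which is a right $\cX$-approximation. Thus $\cX$ is covering and Theorem \ref{thm:n-angulated_torsion2} applies.

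For $(\Rightarrow)$, let $\cX$ be a torsion class and fix $x', x'' \in \cX$ together with a morphism $\delta \colon x'' \rightarrow \Sigma^n x'$. Using axiom (N1)(c) of an $(n+2)$-angulated category I would first extend $\delta$ to some $(n+2)$-angle
\[
  x' \rightarrow d^1 \rightarrow d^2 \rightarrow \cdots \rightarrow d^n \rightarrow x'' \stackrel{ \delta }{ \rightarrow } \Sigma^n x'.
\]
Now apply the torsion decomposition to $d^1$, producing an $(n+2)$-angle $x \rightarrow d^1 \rightarrow y^1 \rightarrow \cdots \rightarrow y^n \rightarrow \Sigma^n x$ with $x \in \cX$ and $( y^1, \ldots, y^n ) \in \Xexact$. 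The goal is to splice the two angles along the common vertex $d^1$ into a single $(n+2)$-angle
\[
  x' \rightarrow x \rightarrow c^2 \rightarrow \cdots \rightarrow c^n \rightarrow x'' \stackrel{ \delta }{ \rightarrow } \Sigma^n x',
\]
whose first middle term $x$ lies in $\cX$, thereby verifying left closure under $n$-extensions.

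The main obstacle is this splicing step. Unlike the classical octahedral axiom, its $(n+2)$-angulated analogue must assemble $n$ middle terms from two overlapping $(n+2)$-angles, and one must verify that the interaction between the $d^i$ and $y^j$ terms is governed by the $\Xexact$ Hom-exactness conditions tightly enough to force both the existence of the new $c^i$ and the preservation of $\delta$ at the end of the angle. I would carry this out either via the mapping cone / good morphism formalism for $(n+2)$-angulated categories from \cite{GKO}, or by lifting into the ambient derived category in which $\cC( \Lambda )$ sits as an $n$-cluster tilting subcategory, where the standard triangulated octahedron is available and one only has to check that the resulting terms again lie in $\cC( \Lambda )$.
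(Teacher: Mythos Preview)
Your $(\Leftarrow)$ direction is essentially the paper's, but contains a factual slip: $\cC(\Lambda) = \add\{\,\Sigma^{in}M \mid i \in \BZ\,\}$ has \emph{infinitely} many indecomposables, and so may $\cX$ (e.g.\ $\cX = \cC^{\leqslant 0}$). The correct argument, which is Lemma \ref{lem:functorially_finite}(ii), is that because $\cM$ has only finitely many indecomposables and $\gldim\Lambda \leqslant n$, each fixed object of $\Db(\mod\,\Lambda)$ receives nonzero morphisms from only finitely many indecomposables of $\cC$; that suffices to build a right $\cX$-approximation of any $c$.

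Your $(\Rightarrow)$ direction has a genuine gap. The ``splicing'' you propose is not an available operation: axiom (F4) of \cite{GKO} concerns a commutative square built from two composable morphisms, not two $(n+2)$-angles that merely share a vertex, and there is no mechanism that inputs your two angles and outputs an $(n+2)$-angle $x' \to x \to c^2 \to \cdots \to x'' \stackrel{\delta}{\to} \Sigma^n x'$. Lifting to $\Db(\mod\,\Lambda)$ does not rescue this either: by the construction in \cite[thm.\ 1]{GKO}, the first middle term of any completion of $\delta$ is a $\cC$-preenvelope of the cone of $\Sigma^{-n}\delta$, and the $\cX$-cover $x$ of your $d^1$ has no reason to be such a preenvelope.

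The paper's argument is structurally different and does not build a new angle at all. It first chooses a \emph{minimal} completion
\[
  x' \stackrel{\xi'}{\longrightarrow} c^1 \stackrel{\gamma}{\longrightarrow} c^2 \longrightarrow \cdots \longrightarrow x'' \stackrel{\delta}{\longrightarrow} \Sigma^n x'
\]
in which $\gamma$ lies in the radical of $\cC$; this is where the specific setting of $\cC(\Lambda)$ enters, via \cite[lem.\ 5.18]{OT}. One then applies the torsion decomposition to $c^1$, obtaining an $\cX$-precover $x \stackrel{\xi}{\to} c^1$ with $\Xexact$ tail, extends the factorisation of $\xi'$ through $\xi$ to a morphism of $(n+2)$-angles, and uses the $\Xexact$ condition together with Lemma \ref{lem:exact2} to produce a chain homotopy yielding $\id_{c^1} = \sigma^2\gamma + \xi\sigma^1$. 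Since $\gamma$ is radical, $\xi\sigma^1$ is invertible, so $\sigma^1$ is a split monomorphism and $c^1$ is a summand of $x \in \cX$. This minimality ingredient is exactly what the introduction signals as the reason both implications go through for $\cC(\Lambda)$, and it is absent from your outline.
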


The reason we can dispense with the explicit condition that $\cX$ is
covering is that this is now automatic, see Lemma
\ref{lem:functorially_finite}(ii).  We are able to get implications in
both directions because we here have access to ``minimal''
$(n+2)$-angles, see the proof of Theorem
\ref{thm:n-angulated_torsion}.

The algebra $\Lambda$ also has an associated $n$-abelian category
$\cM( \Lambda )$, see \cite[def.\ 2.2]{IO}, \cite[thm.\ 1.6]{I1},
\cite[thm.\ 3.16]{J} or Setup \ref{set:blanket}, and we give the
following characterisation of torsion classes in $\cM( \Lambda )$.

\begin{Theorem}
[=Theorem \ref{thm:n-abelian_torsion}]
\label{thm:B}
Let $\cU \subseteq \cM( \Lambda )$ be a full subcategory closed under
sums and summands.  Then
$\cU$ is a torsion class in $\cM( \Lambda )$
$\Leftrightarrow$
$\cU$ has Property $(E)$.
\end{Theorem}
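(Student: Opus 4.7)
The plan is to run the proof in parallel with that of Theorem \ref{thm:A}, exploiting the fact that $\cM( \Lambda )$ is, like $\cC( \Lambda )$, a ``rich enough'' higher homological category: subcategories closed under sums and summands are automatically functorially finite (the $n$-abelian counterpart of Lemma \ref{lem:functorially_finite}(ii)), and every morphism admits a ``minimal'' $n$-kernel/$n$-cokernel completion. I anticipate that Property $(E)$ is the natural $n$-abelian analogue of being left closed under $n$-extensions: every class in $\Ext^n_{ \cM( \Lambda ) }( u'', u' )$ with $u', u'' \in \cU$ is represented by an $n$-exact sequence $0 \rightarrow u' \rightarrow m^1 \rightarrow \cdots \rightarrow m^n \rightarrow u'' \rightarrow 0$ whose first middle term $m^1$ lies in $\cU$.

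For the direction ($\Leftarrow$), assume Property $(E)$ and fix $m \in \cM( \Lambda )$. Because $\cU$ is functorially finite in $\cM( \Lambda )$, choose a minimal right $\cU$-approximation $\theta \colon u \rightarrow m$; a standard $n$-abelian argument shows that $\theta$ is a monomorphism and can be extended to an $n$-exact sequence $0 \rightarrow u \stackrel{ \theta }{ \rightarrow } m \rightarrow v^1 \rightarrow \cdots \rightarrow v^n \rightarrow 0$. It remains to prove that $v^1 \rightarrow \cdots \rightarrow v^n$ belongs to $\Uexact$, i.e.\ that $0 \rightarrow \cM( \Lambda )( u_0, v^1 ) \rightarrow \cdots \rightarrow \cM( \Lambda )( u_0, v^n ) \rightarrow 0$ is exact for every $u_0 \in \cU$. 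Here I would invoke an $n$-abelian Wakamatsu-type lemma analogous to Lemma \ref{lem:Wakamatsu} to reduce exactness at interior spots to Property $(E)$, and use the minimality of $\theta$ to obtain exactness at the outer spots.

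For the direction ($\Rightarrow$), assume $\cU$ is a torsion class and let $0 \rightarrow u' \rightarrow m^1 \rightarrow \cdots \rightarrow m^n \rightarrow u'' \rightarrow 0$ be an $n$-exact sequence with $u', u'' \in \cU$. Apply the torsion class decomposition to $m^1$ to obtain $0 \rightarrow u \rightarrow m^1 \rightarrow v^1 \rightarrow \cdots \rightarrow v^n \rightarrow 0$ with $u \in \cU$ and $v^\bullet \in \Uexact$, then splice and use the long exact $\Ext^\bullet( u'', - )$ sequence together with $v^\bullet \in \Uexact$ to conclude that the class of the original sequence is represented by one with first middle term in $\cU$.

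The main obstacle is the $(\Leftarrow)$ direction: manipulating $n$-exact sequences is significantly less flexible than using $(n+2)$-angles, since the higher octahedral axiom and its consequences are weaker in $n$-abelian categories. The cleanest alternative, which I would pursue in parallel as a sanity check, is to push everything into $\cC( \Lambda )$ via the natural embedding $\cM( \Lambda ) \hookrightarrow \cC( \Lambda )$, verify that the translations ``$\cU$ torsion in $\cM( \Lambda )$'' $\Leftrightarrow$ ``$\cU$ torsion in $\cC( \Lambda )$'' and ``Property $(E)$ in $\cM( \Lambda )$'' $\Leftrightarrow$ ``left closed under $n$-extensions in $\cC( \Lambda )$'' both hold, and then deduce Theorem B from Theorem A.
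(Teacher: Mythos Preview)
Your guess for Property $(E)$ is wrong, and this undermines both directions.  In the paper, Property $(E)$ is \emph{not} an internal $n$-abelian condition: it is phrased in the ambient derived category $\Db(\mod\,\Lambda)$ and asks that for every triangle
\[
  \Sigma^{-n}u'' \oplus m'' \longrightarrow u' \longrightarrow e \longrightarrow \Sigma^{-n+1}u'' \oplus \Sigma m''
\]
with $u',u'' \in \cU$ and $m'' \in \cM$, the object $e$ admits an $\cM$-preenvelope lying in $\cU$.  The extra summand $m''$ is essential: specialising to $u'' = 0$ gives precisely a ``closed under quotients'' condition (Property $(F)$ with $u''=0$), and for $n=1$ the whole condition is equivalent to $\cU$ being closed under quotients \emph{and} extensions (Proposition~\ref{pro:1}).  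Your proposed version --- only $n$-extensions between objects of $\cU$ --- drops the $m''$ and hence the quotient-closure part.

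This matters immediately in your ``$\Leftarrow$'' argument.  The step ``a standard $n$-abelian argument shows that $\theta$ is a monomorphism'' is not standard and is in fact the content of Lemma~\ref{lem:injective}, whose proof uses Property $(F)$ with $u''=0$: one factors the cover $u \twoheadrightarrow b' \hookrightarrow m$ in $\mod(\Lambda)$, takes an $\cM$-preenvelope of the quotient $b'$ landing in $\cU$, and uses minimality of the cover to force injectivity.  Without the quotient-closure built into the actual Property $(E)$, there is no reason for a $\cU$-cover of $m$ to be monic, and the argument collapses.  The paper then finishes ``$\Leftarrow$'' not by a Wakamatsu-style argument inside $\cM$, but by an $\Ext$-chasing computation in $\mod(\Lambda)$ (Lemmas~\ref{lem:Ext_chain} and~\ref{lem:vanishing}) together with a triangle argument in $\Db$ that again uses the full strength of Property $(E)$.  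Your alternative route through $\cC(\Lambda)$ also misfires: the correct translation is not ``$\cU$ torsion in $\cC$'' but ``$\cX(\cU) = \add(\cU \cup \cC^{\leqslant -n})$ is left closed under $n$-extensions'', and the equivalence of this with Property $(E)$ is exactly equation~\eqref{equ:tors_eq} in the proof of Theorem~\ref{thm:intermediate}, which the paper deduces \emph{from} Theorems~\ref{thm:n-abelian_torsion} and~\ref{thm:n-angulated_torsion}, not the other way round.
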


Property $(E)$ is stated in Definition \ref{def:E} and we do not
reproduce it here, but point out that it is an $n$-homological version
of the classic property of being closed under quotients and
extensions, to which it specialises when $n = 1$, see Proposition
\ref{pro:1}.  The following provides a simple example.

\begin{Theorem}
[=Theorem \ref{thm:Fac}]
\label{thm:06}
Let $t$ be an $n$-APR tilting module for $\Lambda$, as introduced in
\cite[def.\ 3.1 and obs.\ 4.1(1)]{IO}.  Then $\cM( \Lambda ) \cap
\Fac(t)$ is a splitting torsion class in $\cM( \Lambda )$.
\end{Theorem}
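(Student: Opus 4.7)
The plan is to apply Theorem \ref{thm:n-abelian_torsion} (Theorem B): to see that $\cU := \cM(\Lambda) \cap \Fac(t)$ is a torsion class in $\cM(\Lambda)$, I need only check that $\cU$ is closed under sums and summands and that it satisfies Property $(E)$. Once this is done, the splitting conclusion is handled separately using the specific combinatorial structure of $n$-APR tilting.

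Closure under sums and summands is immediate, since $\Fac(t)$ has these closure properties in $\mod\Lambda$ by general tilting theory, and $\cM(\Lambda)$ has them as a cluster tilting subcategory. For Property $(E)$, I would exploit the two guiding facts about $\Fac(t)$ for an $n$-tilting module with $\pd_\Lambda t \leqslant n$: it is closed under quotients in $\mod\Lambda$, and (using the equality $\Fac(t)=t^{\perp_{>0}}$) it is closed under ordinary extensions. Since Property $(E)$ is designed to specialise to these two closure conditions when $n=1$ (Proposition \ref{pro:1}), the natural route is to show that the $n$-exact sequences in $\cM(\Lambda)$ appearing in Property $(E)$ can be decomposed into short exact sequences in $\mod\Lambda$ along the ambient cluster tilting structure; then the classical quotient- and extension-closure of $\Fac(t)$ inside $\mod\Lambda$ propagates through these short pieces and yields Property $(E)$ for $\cU$.

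The splitting assertion uses the very restrictive nature of $n$-APR tilting, namely that $t=(\Lambda/eP)\oplus \tau_n^{-1}(eP)$ for a simple projective summand $eP$. Using the explicit description of $\cM(\Lambda)$ and of its $n$-APR mutation from \cite{IO}, I would show that every indecomposable of $\cM(\Lambda)$ either has $eP$ as a summand of its top (so it is torsion-free with respect to $\cU$) or has no such summand (so it lies in $\Fac(t)$, hence in $\cU$), and that no indecomposable mixes the two behaviours; this gives the splitting. The main obstacle will be the middle step: unpacking the precise configuration of morphisms required by Property $(E)$ in Definition \ref{def:E}, and verifying it cleanly via the short exact sequences underlying the $n$-exact sequences of $\cM(\Lambda)$, since Property $(E)$ is a more intricate statement than mere closure under quotients and extensions and must be checked against its full definition rather than only against its $n=1$ shadow.
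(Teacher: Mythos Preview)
Your route through Theorem~\ref{thm:n-abelian_torsion} and Property~$(E)$ is a detour the paper avoids entirely. The paper's proof is much more elementary: it first identifies $\cM \cap \Fac(t) = \add(M')$, where $M = M' \oplus p$ with $p$ the simple projective from Setup~\ref{set:nAPR}, using results from \cite{IO} (specifically $\Hom_\Lambda(t,p)=0$ and $\add(M') \subseteq \Fac(t)$). Once this is known, the torsion class property and the splitting are proved \emph{simultaneously and directly from Definition~\ref{def:torsion}}: any $m \in \cM = \add(M' \oplus p)$ decomposes as $m \cong u \oplus v$ with $u \in \add(M')$, $v \in \add(p)$, and the split sequence $0 \to u \to m \to v \to 0 \to \cdots \to 0$ is already the required $n$-exact sequence, with the $\Uexact$ condition following from $\Hom_\Lambda(M',p)=0$.

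Your approach, by contrast, aims to verify Property~$(E)$ and then deduce splitting separately. The sketch you give for Property~$(E)$ has a genuine gap: saying that $\Fac(t)$ is closed under quotients and ordinary extensions in $\mod\Lambda$, and that Property~$(E)$ ``specialises to these when $n=1$'', does not establish Property~$(E)$ for $n \geqslant 2$. Property~$(E)$ concerns $\cM$-preenvelopes of objects $e$ arising from specific triangles in $\Db(\mod\Lambda)$, and you have not said how quotient- and extension-closure of $\Fac(t)$ controls such preenvelopes. You would almost certainly end up needing the same identification $\cM \cap \Fac(t) = \add(M')$ to make this work, at which point the direct argument is strictly shorter. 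Your splitting criterion (``$eP$ as a summand of the top'') is also slightly off: since $p$ is simple \emph{projective}, the relevant dichotomy is whether $p$ occurs as a direct summand of $m$, not whether it appears in the top.
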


{\em Splitting} means that the morphism $\theta$ in Definition
\ref{def:torsion} can be taken to be a split monomorphism, and
$\Fac(t)$ is the class of quotients in $\mod( \Lambda )$ of modules of
the form $t^{ \oplus d }$.  For an example of a torsion class in an $(
n+2 )$-angulated category $\cC$, take $\cX = \add( T )$ where $T$ is a
cluster tilting object in $\cC$ in the sense of \cite[def.\ 5.3]{OT},
and $\add$ means taking summands of sums.

In classic homological algebra, there is a well-known bijection
between torsion classes in an abelian category and so-called
intermediate t-structures in the derived category.  This is due to
\cite[thm.\ 3.1]{BR} and \cite[prop.\ 2.1]{HRS}, see \cite[prop.\
2.1]{W} for a clean statement.  Recall that a classic t-structure
consists of two classes of objects, the aisle and the coaisle, and
that the aisle is a torsion class, see \cite[sec.\ 1]{KV}.  We will
consider aisles in $( n+2 )$-angulated categories, and will use the
previous results to show the following.  Note that $\cC( \Lambda )$
can reasonably be thought of as the $n$-derived category of $\cM(
\Lambda )$.

\begin{Theorem}
[=Theorem \ref{thm:intermediate}]
\label{thm:C}
There is a bijection
\[
  \biggl\{
    \cU \subseteq \cM( \Lambda )
  \,\bigg|\,
    \cU \mbox{ is a torsion class}
  \biggr\}
  \leftrightarrow
  \biggl\{
    \cX \subseteq \cC( \Lambda )
  \,\bigg|
    \begin{array}{ll}
      \mbox{$\cX$ is an aisle with} \\
      \cC^{\leqslant -n}( \Lambda ) \subseteq \cX \subseteq \cC^{\leqslant 0}( \Lambda ) \\
    \end{array}
  \biggr\}.
\]
\end{Theorem}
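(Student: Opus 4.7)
The plan is to set up explicit mutually inverse maps using the truncation and cohomology functors associated to the standard aisle on $\cC( \Lambda )$, under which $\cM( \Lambda )$ is the heart. I will use that for each $c \in \cC^{\leqslant 0}( \Lambda )$ there is a standard truncation $(n+2)$-angle relating $c$ to an object $\tau_{\leqslant -n} c \in \cC^{\leqslant -n}( \Lambda )$ and to $\H^0( c ) \in \cM( \Lambda )$, and that $\cC^{\leqslant -n}( \Lambda ) = \Sigma^n \cC^{\leqslant 0}( \Lambda )$. The two maps are
\[
  \Psi( \cX ) := \cX \cap \cM( \Lambda ),
  \qquad
  \Phi( \cU ) := \bigl\{\, c \in \cC^{\leqslant 0}( \Lambda ) \,\bigm|\, \H^0( c ) \in \cU \,\bigr\}.
\]

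First I would verify that the maps land in the correct targets. For $\Phi( \cU )$: the inclusions $\cC^{\leqslant -n}( \Lambda ) \subseteq \Phi( \cU ) \subseteq \cC^{\leqslant 0}( \Lambda )$ are immediate because $\H^0$ vanishes on $\cC^{\leqslant -n}( \Lambda )$; stability under $\Sigma^n$ follows from $\Sigma^n \Phi( \cU ) \subseteq \cC^{\leqslant -n}( \Lambda ) \subseteq \Phi( \cU )$; and the torsion property, which by Theorem \ref{thm:A} reduces to left closure under $n$-extensions, would be derived from a long exact sequence of cohomology applied to an $(n+2)$-angle together with Property $(E)$ of $\cU$ supplied by Theorem \ref{thm:B}. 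For $\Psi( \cX )$: I would show that $\cX \cap \cM( \Lambda )$ satisfies Property $(E)$ by lifting its defining $n$-exact sequences in $\cM( \Lambda )$ to $(n+2)$-angles in $\cC( \Lambda )$ and invoking the closure properties of the aisle $\cX$ that come from Theorem \ref{thm:A}; Theorem \ref{thm:B} then delivers the torsion class.

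For bijectivity, $\Psi \Phi( \cU ) = \cU$ is immediate because $\H^0$ restricts to the identity on $\cM( \Lambda )$, so $m \in \Phi( \cU )$ iff $m \in \cU$ for $m \in \cM( \Lambda )$. The identity $\Phi \Psi( \cX ) = \cX$ is the technical heart: given $c \in \cX$, the inclusion $\cX \subseteq \cC^{\leqslant 0}( \Lambda )$ places $c$ in $\cC^{\leqslant 0}( \Lambda )$, and the truncation $(n+2)$-angle together with $\cC^{\leqslant -n}( \Lambda ) \subseteq \cX$ and closure of the aisle $\cX$ under $(n+2)$-angulated cones forces $\H^0( c ) \in \cX \cap \cM( \Lambda ) = \Psi( \cX )$, hence $c \in \Phi \Psi( \cX )$. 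Conversely, if $c \in \Phi \Psi( \cX )$ then the same truncation angle exhibits $c$ as an extension of $\H^0( c ) \in \cX$ by $\tau_{\leqslant -n} c \in \cC^{\leqslant -n}( \Lambda ) \subseteq \cX$, which lies in $\cX$ by its closure under extensions.

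I expect the main obstacle to be establishing the truncation/cohomology machinery for $\cC( \Lambda )$ as an $(n+2)$-angulated category and checking that the width-$n$ window between $\cC^{\leqslant -n}( \Lambda )$ and $\cC^{\leqslant 0}( \Lambda )$ translates correctly between the $(n+2)$-angulated closure conditions appearing in Theorem \ref{thm:A} and the $n$-exact sequence conditions packaged in Property $(E)$ and Theorem \ref{thm:B}. Once this dictionary is settled, the closure properties transport cleanly across $\Phi$ and $\Psi$ and the bijection drops out.
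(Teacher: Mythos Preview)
Your overall plan matches the paper's: set $\cX(\cU) = \add(\cU \cup \cC^{\leqslant -n})$, which coincides with your $\Phi(\cU)$ because every object of $\cC^{\leqslant 0}$ already \emph{splits} as $m \oplus c'$ with $m \in \cM$ and $c' \in \cC^{\leqslant -n}$, and then use Theorems \ref{thm:A} and \ref{thm:B} to reduce to the equivalence ``$\cU$ has Property $(E)$ $\Leftrightarrow$ $\cX(\cU)$ is left closed under $n$-extensions''. Because of this splitting, the bijectivity $\Phi\Psi = \id$ and $\Psi\Phi = \id$ at the level of full subcategories closed under sums and summands is a triviality; you do not need truncation $(n+2)$-angles or any closure arguments there. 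This matters, because the closure properties you invoke are not available: by Theorem \ref{thm:A} an aisle is only \emph{left} closed under $n$-extensions, meaning that for each $\delta \colon x'' \to \Sigma^n x'$ \emph{some} $(n+2)$-angle has its first term in $\cX$, not that every one does, and certainly not that $\cX$ is closed under arbitrary ``cones'' or ``extensions'' (Example \ref{exa:extension_closed} shows the gap is real).

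The substantive content is therefore the equivalence itself, and here your sketch is too vague and partly misdirected. There is no ``long exact sequence of cohomology'' on $(n+2)$-angles in $\cC$ that does the job; the paper works directly inside $\Db(\mod\,\Lambda)$. For ``$\Rightarrow$'' one uses the GKO construction of $(n+2)$-angles: given $\delta \colon x'' \to \Sigma^n x'$ with $x',x'' \in \cX(\cU)$, form the triangle $\Sigma^{-n}x'' \to x' \to d$ in $\Db$ and take $c^1$ to be a $\cC$-preenvelope of $d$; two applications of the octahedral axiom strip the $\cC^{\leqslant -n}$-summands off $x'$ and $x''$ and reduce the $\cM$-component of $c^1$ to an $\cM$-preenvelope of precisely the object $e$ appearing in Property $(E)$. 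For ``$\Leftarrow$'' one starts from the triangle in Definition \ref{def:E}, recognises $\Sigma^n\psi'' \colon u'' \oplus \Sigma^n m'' \to \Sigma^n u'$ as a morphism between objects of $\cX(\cU)$, uses left closure to obtain an $(n+2)$-angle $u' \to x^1 \to \cdots$ with $x^1 = u \oplus c$ ($u \in \cU$, $c \in \cC^{\leqslant -n}$), and then verifies by a direct diagram chase that the resulting map $e \to u$ is an $\cM$-preenvelope. Your phrase ``lifting its defining $n$-exact sequences'' does not fit: Property $(E)$ is a condition on triangles in $\Db(\mod\,\Lambda)$, not on $n$-exact sequences in $\cM$.
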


We refer the reader to Definition \ref{def:C_leq} for the notation
$\cC^{\leqslant 0}( \Lambda )$, but state our definition of aisles.

\begin{Definition}
[Aisles]
\label{def:t-structures}
An {\em aisle} in an $( n+2 )$-angulated category is a
torsion class $\cX$ satisfying $\Sigma^n \cX \subseteq \cX$.
\hfill $\Box$
\end{Definition}

It is reasonable to ask if there are versions of Theorems \ref{thm:B}
and \ref{thm:C} for more general $n$-abelian and $( n+2 )$-angulated
categories.  As far as Theorem \ref{thm:B} is concerned, the current
version of Property $(E)$ refers to an ambient triangulated category
of $\cM( \Lambda )$, and it is not yet clear how to get an equivalent
``internal'' property which makes sense for general $n$-abelian
categories.  Theorem \ref{thm:C} appears to face a more serious
obstruction, since we do not presently know how to pass from a general
$n$-abelian category to an $n$-derived $( n+2 )$-angulated category.
Indeed, the only setting where we know how to do so is that
of $\cM( \Lambda )$ and $\cC( \Lambda )$.  This needs to be
generalised before one can hope to generalise Theorem \ref{thm:C}.
Another question is whether an aisle in an $( n+2 )$-angulated category
gives rise to an $n$-abelian heart.  There are special cases where the
answer appears to be yes; indeed, the equation $\cC^{\leqslant 0}(
\Lambda ) \cap \cC^{\geqslant 0}( \Lambda ) = \cM( \Lambda )$ suggests
that the heart corresponding to $\cC^{\leqslant 0}( \Lambda )$ is
$\cM( \Lambda )$.  However, we do not yet have a construction which
associates a heart to a general aisle.

To round off, note that everything we have said can be dualised,
resulting in torsion free classes and co-aisles.

The paper is organised as follows.  Section \ref{sec:lemmas} gives
some background.  Section \ref{sec:Wakamatsu} gives an $( n+2
)$-angulated version of Wakamatsu's Lemma.  Section
\ref{sec:categories} prepares the rest of the paper by giving some
background on the categories $\cM( \Lambda )$ and $\cC( \Lambda )$.
Section \ref{sec:E} introduces Property $(E)$ and shows some initial
results about it.  Sections \ref{sec:n-abelian} through
\ref{sec:example1} prove Theorems \ref{thm:A} through \ref{thm:C},
though not in that order.  Section \ref{sec:example2} shows an example
of a non-splitting torsion class in $\cM( \Lambda )$.

Our terminology is mainly standard.  We use the words (pre)cover
and (pre)envelope due to Enochs \cite[sec.\ 1]{E}.  They translate as
follows to Auslander and Smal\o's terminology \cite{AS}: A precover is
a right approximation, a cover is a minimal right approximation, a
preenvelope is a left approximation, and an envelope is a minimal left
approximation.

\section{Background}
\label{sec:lemmas}

Recall that throughout, $n \geqslant 1$ is a fixed integer.  We start
with the definition of $n$-abelian categories.

\begin{Definition}
[Jasso {\cite[defs.\ 2.2, 2.4, and 3.1]{J}}]
\label{def:n-abelian}
Let $\cM$ be an additive category.

An {\em $n$-kernel} of a morphism $m_1 \stackrel{ \mu_1 }{ \rightarrow
} m_0$ is a sequence $m_{ n+1 } \stackrel{ \mu_{ n+1 } }{
  \longrightarrow } \cdots \stackrel{ \mu_2 }{ \longrightarrow } m_1$
such that
\begin{equation}
\label{equ:n-kernel}
  0
  \longrightarrow \cM( m,m_{ n+1 } )
  \stackrel{ ( \mu_{ n+1 } )_* }{ \longrightarrow } \cdots
  \stackrel{ ( \mu_{ 2 } )_* }{ \longrightarrow } \cM( m,m_1 )
  \stackrel{ ( \mu_{ 1 } )_* }{ \longrightarrow } \cM( m,m_0 )
\end{equation}
is exact for each $m \in \cM$.  The notion of {\em $n$-cokernel} is
defined dually.

An {\em $n$-exact sequence} is a sequence $m_{ n+1 }
\stackrel{ \mu_{ n+1 } }{ \longrightarrow } \cdots \stackrel{ \mu_1 }{
  \longrightarrow } m_0$ such that
\eqref{equ:n-kernel} and
\[
  0
  \longrightarrow \cM( m_{ 0 },m )
  \stackrel{ \mu_{ 1 }^{ * } }{ \longrightarrow } \cdots
  \stackrel{ \mu_{ n }^{ * } }{ \longrightarrow } \cM( m_n,m )
  \stackrel{ \mu_{ n+1 }^{ * } }{ \longrightarrow } \cM( m_{ n+1 },m )
\]
are exact for each $m \in \cM$.  That is, the morphisms $\mu_{ n+1 },
\ldots, \mu_2$ form an $n$-kernel of $\mu_1$ and the morphisms $\mu_n,
\ldots, \mu_1$ form an $n$-cokernel of $\mu_{ n+1 }$.  In particular,
$\mu_{ n+1 }$ is monic and $\mu_1$ is epic and the sequence is often
written
\begin{equation}
\label{equ:n-exact_sequence}
  0 \longrightarrow m_{ n+1 } \stackrel{ \mu_{ n+1 } }{
  \longrightarrow } \cdots \stackrel{ \mu_1 }{ \longrightarrow } m_0  
  \longrightarrow 0.
\end{equation}

We say that $\cM$ is {\em $n$-abelian} if it satisfies the following. 
\begin{itemize}[leftmargin=70pt,itemsep=3pt]

  \item[{[$n$AB0]}]  $\cM$ has split idempotents.

  \item[{[$n$AB1]}]  Each morphism has an $n$-kernel and an
    $n$-cokernel. 

  \item[{[$n$AB2]}] If $m_{ n+1 } \stackrel{ \mu_{ n+1 } }{
      \rightarrow } m_n$ is monic and $m_n \stackrel{ \mu_n }{
      \longrightarrow } \cdots \stackrel{ \mu_1 }{ \longrightarrow }
    m_0$ is an $n$-cokernel of $\mu_{ n+1 }$, then
    \eqref{equ:n-exact_sequence} is an $n$-exact sequence.

  \item[{[$n$AB2${}^{\opp}$]}] If $m_1 \stackrel{ \mu_1 }{ \rightarrow
    } m_0$ is epic and $m_{ n+1 } \stackrel{ \mu_{ n+1 } }{
      \longrightarrow } \cdots \stackrel{ \mu_2 }{ \longrightarrow }
    m_1$ is an $n$-kernel of $\mu_1$, then
    \eqref{equ:n-exact_sequence} is an $n$-exact sequence.
    \hfill $\Box$

\end{itemize}
\end{Definition}

The following is an abridged form of the definition of
$(n+2)$-angulated categories.

\begin{Definition}
[{Geiss, Keller, and Oppermann \cite[def.\ 2.1]{GKO}}]
\label{def:n-angulated}
Let $\cC$ be an additive category, let $\Sigma^n$ an automorphism of
$\cC$, referred to as the $n$-suspension functor (this is just
notation; there is no assumption that $\Sigma^n$ is in fact an $n^{\rm
  th}$ power), and let $\pentagon$ be a class of diagrams of the form
\[
  c^0 \rightarrow c^1 \rightarrow \cdots \rightarrow c^n \rightarrow
  c^{ n+1 } \rightarrow \Sigma^n c^0
\]
called {\em $( n+2 )$-angles}.  The triple $( \cC , \Sigma^n ,
\pentagon )$ is called an {\em $( n+2 )$-angulated category} if it
satisfies axioms (F1)--(F4) of \cite[def.\ 2.1]{GKO}.

Among the axioms, we highlight that for each $c \in \cC$, the diagram
$c \stackrel{ \id_c }{ \rightarrow } c \rightarrow 0 \rightarrow
\cdots \rightarrow 0 \rightarrow \Sigma^n c$ is an $( n+2 )$-angle.
Another axiom concerns diagrams of the following form.
\[
  \xymatrix {
    c^0 \ar[r] \ar_{ \gamma }[d] & c^1 \ar[r] \ar[d]
      & c^2 \ar[r] \ar@{.>}[d] & \cdots \ar[r]
      & c^{ n+1 } \ar[r] \ar@{.>}[d] 
      & \Sigma^n c^0 \ar^{ \Sigma^n \gamma }[d] \\
    \widetilde{ c }^0 \ar[r] & \widetilde{ c }^1 \ar[r] 
      & \widetilde{ c }^2 \ar[r] & \cdots \ar[r]
      & \widetilde{ c }^{ n+1 } \ar[r]
      & \Sigma^n \widetilde{ c }^0\\
            }
\]
If the rows are $( n+2 )$-angles and the commutative square on the left
is given, then there exist stippled arrows completing to a commutative
diagram.
\hfill $\Box$
\end{Definition}

Let us recall that $n$-abelian and $( n+2 )$-angulated categories were
introduced to formalise the behaviour of $n$-cluster tilting
subcategories.

\begin{Definition}
[{Iyama \cite[def.\ 1.1]{I1}, Keller and Reiten \cite[sec.\ 5.1]{KR}}]
\label{def:cluster_tilting}
A full subcategory $\cT$ of an abelian or triangulated category $\cD$ is
called {\em $n$-cluster tilting} if it is functorially finite,
satisfies 
\begin{align*}
  \cT
  & = \{\, d \in \cD \,|\, \Ext^1( \cT,d ) = \cdots = \Ext^{ n-1 }( \cT,d ) = 0 \,\} \\ 
  & = \{\, d \in \cD \,|\, \Ext^1( d,\cT ) = \cdots = \Ext^{ n-1 }( d,\cT ) = 0 \,\},
\end{align*}
and, in the abelian case, is generating and co-generating, that is,
each object $d \in \cD$ permits an epic and a monic $t
\twoheadrightarrow d \hookrightarrow t'$ with $t,t' \in \cT$.

An object $t$ is called {\em $n$-cluster tilting} if $\add( t )$ is an
$n$-cluster tilting subcategory.
\hfill $\Box$
\end{Definition}

\begin{Theorem}
[{Jasso \cite[thm.\ 3.16]{J}}]
\label{thm:J}
An $n$-cluster tilting subcategory of an abelian category is
$n$-abelian. 
\end{Theorem}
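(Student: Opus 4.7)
The plan is to verify the four axioms of Definition \ref{def:n-abelian} for the $n$-cluster tilting subcategory $\cT \subseteq \cA$. Axiom [$n$AB0] is immediate: $\cT$ is closed under direct summands (being cut out by $\Ext$-vanishing conditions) and so inherits split idempotents from $\cA$.

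The key technical input for the remaining axioms is a preliminary existence lemma: every $X \in \cA$ admits a resolution $0 \to t_{n-1} \to \cdots \to t_0 \to X \to 0$ with each $t_i \in \cT$. I would build this iteratively. Given the current syzygy $K_i \in \cA$, combine a right $\cT$-approximation $s \to K_i$ (existing by functorial finiteness) with a surjection $t \twoheadrightarrow K_i$ from $\cT$ (existing by generation) to obtain a surjective right $\cT$-approximation $t_i := s \oplus t \twoheadrightarrow K_i$; set $K_{i+1} := \Ker(t_i \twoheadrightarrow K_i)$. The approximation property forces $\Ext^1(m, K_{i+1}) = 0$ for all $m \in \cT$, and dimension shifting using $\Ext^j(m, t_i) = 0$ for $1 \leqslant j \leqslant n-1$ yields $\Ext^j(m, K_{n-1}) = 0$ in the same range, so that $K_{n-1} \in \cT$ by the $n$-cluster tilting characterization in Definition \ref{def:cluster_tilting}. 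The dual construction, using left $\cT$-approximations and cogeneration, gives a $\cT$-coresolution of length $n-1$.

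Axiom [$n$AB1] then follows easily: given $\mu_1 : t_1 \to t_0$ in $\cT$, apply the lemma to $K := \Ker \mu_1$ and splice the resulting resolution $0 \to t_{n+1} \to \cdots \to t_2 \to K \to 0$ with the inclusion $K \hookrightarrow t_1$ to obtain $t_{n+1} \to \cdots \to t_2 \to t_1 \to t_0$. Applying $\cT(m,-)$ for $m \in \cT$, both the kernel and the image at each interior position equal $\cT(m, K_{i+1})$ (the image by the right-approximation property, the kernel by left exactness of $\Hom$), while injectivity at $t_{n+1}$ follows from $t_{n+1} \hookrightarrow t_n$ being monic. The dual argument yields $n$-cokernels.

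For [$n$AB2] and [$n$AB2$^{\opp}$]: given $\mu_{n+1}$ monic with $n$-cokernel $\mu_n, \ldots, \mu_1$, form $C := \Coker \mu_{n+1}$ in $\cA$ and use its $\cT$-coresolution to produce a canonical exact sequence $0 \to m_{n+1} \to m_n \to s^0 \to \cdots \to s^{n-1} \to 0$ in $\cA$ which is $n$-exact by construction. A chain-map between the given $n$-cokernel and this canonical one, built inductively from the defining $\Hom$-exactness of the $n$-cokernel, is shown to be a homotopy equivalence, transferring $n$-exactness to the given sequence. The main obstacle is this last comparison: establishing the homotopy equivalence uses the uniqueness of $\cT$-approximations up to contractible direct summands, and is where the $n$-cluster tilting hypothesis is deployed most subtly. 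Axiom [$n$AB2$^{\opp}$] follows dually.
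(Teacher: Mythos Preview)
The paper does not supply its own proof of this theorem: it is stated as a citation of Jasso \cite[thm.\ 3.16]{J} with no accompanying argument, so there is nothing in the present paper to compare your proposal against.

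That said, your outline is a faithful sketch of the approach Jasso actually takes. The construction of length-$(n-1)$ $\cT$-(co)resolutions via iterated surjective approximations and dimension shifting is the standard engine, and your verification of [$n$AB1] is correct. For [$n$AB2] your plan is also on the right track, but be aware that the ``homotopy equivalence'' step is where the real work lies: one must show that any two $n$-cokernels of the same monomorphism are homotopy equivalent as complexes (Jasso proves this as a separate comparison lemma), and then that $n$-exactness is invariant under such equivalences. Your proposal correctly identifies this as the subtle point but does not actually carry it out; if you were writing a full proof you would need to supply the inductive construction of the chain maps in both directions together with the homotopies, which is somewhat delicate.
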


\begin{Theorem}
[{Geiss, Keller, and Oppermann \cite[thm.\ 1]{GKO}}]
\label{thm:GKO}
Let $\cD$ be a triangulated category with suspension functor $\Sigma$.
An $n$-cluster tilting subcategory $\cT \subseteq \cD$ satisfying
$\Sigma^n \cT \subseteq \cT$ can be equipped with a structure of $(
n+2 )$-angulated category in which the $n$-suspension is the $n^{\rm th}$
power $\Sigma^n$.
\end{Theorem}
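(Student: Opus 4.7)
The plan is to construct the class of $(n+2)$-angles in $\cT$ by unfolding each morphism of $\cT$ into a ladder of distinguished triangles in the ambient triangulated category $\cD$ and concatenating them. Given a morphism $c^0 \to c^1$ in $\cT$, complete it to a distinguished triangle $c^0 \to c^1 \to x^1 \to \Sigma c^0$ in $\cD$; typically $x^1 \notin \cT$. Using that $\cT$ is functorially finite, take a left $\cT$-approximation $x^1 \to c^2$ and extend it to a distinguished triangle $x^1 \to c^2 \to x^2 \to \Sigma x^1$. Iterate: at each stage the composite $c^i \to x^i \to c^{i+1}$ supplies the next morphism of the prospective $(n+2)$-angle, and a fresh auxiliary object $x^{i+1}$ is produced. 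The Ext-vanishing $\Ext^k_\cD( \cT , \cT ) = 0$ for $1 \leqslant k \leqslant n-1$ built into the definition of $n$-cluster tilting, together with $\Sigma^n \cT \subseteq \cT$, forces the process to terminate after $n$ iterations with the last auxiliary object canonically isomorphic to $\Sigma^n c^0$, closing the construction with a morphism $c^{n+1} \to \Sigma^n c^0$ in $\cT$. Declare $\pentagon$ to consist of all sequences isomorphic (as diagrams) to one arising in this way; a dual construction shows that $\Sigma^n$ also restricts to an automorphism of $\cT$.

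With $\pentagon$ in hand, the task is to verify axioms (F1)--(F4) of Definition \ref{def:n-angulated}. Axiom (F1), comprising closure under isomorphism and direct sums, the existence of the trivial identity angle, and the possibility of completing any morphism into an $(n+2)$-angle, is essentially built into the construction. Axiom (F2), rotation, is obtained by rotating each underlying distinguished triangle in $\cD$ and reorganising the result into the standard form of the construction. Axiom (F3), the morphism axiom, which requires filling a left-end commutative square between two $(n+2)$-angles to a morphism of $(n+2)$-angles, is proved by inductively lifting maps between the two underlying ladders of triangles one rung at a time; the obstructions at each rung live in Ext groups of the form $\Ext^k_\cD( \cT , \cT )$ with $1 \leqslant k \leqslant n-1$, and these vanish by hypothesis.

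The main obstacle is axiom (F4), the higher-octahedral axiom. Given a morphism of $(n+2)$-angles, one must construct a third $(n+2)$-angle encoding the ``mapping cone'' of that morphism, together with a large coherent commutative diagram. The strategy is to apply the classical octahedral axiom of $\cD$ at each rung of the two ladders underlying the given $(n+2)$-angles, and to lift the various $\cT$-approximations to morphisms that are compatible between the ladders using Ext-vanishing. Assembling the stack of small octahedra thus produced into a single coherent diagram whose rows realise the three required $(n+2)$-angles in $\pentagon$ is the combinatorial heart of the argument; the hypothesis $\Sigma^n \cT \subseteq \cT$ is invoked precisely to guarantee that the final rung of the new ladder closes up inside $\cT$ rather than merely inside $\cD$.
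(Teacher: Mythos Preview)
The paper does not prove this statement at all: it is quoted as a result of Geiss, Keller, and Oppermann, and immediately after the statement the author writes ``We leave the statement of Theorem \ref{thm:GKO} vague by not explaining how the $(n+2)$-angles are defined.  The reader is referred to \cite[thm.\ 1]{GKO} for the details.''  So there is no in-paper proof to compare against.

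As for the substance of your sketch, it has the right architecture (towers of triangles in $\cD$, Ext-vanishing on $\cT$, the hypothesis $\Sigma^n\cT\subseteq\cT$ to close up), but there is a genuine gap in how you define the class $\pentagon$.  You construct one $(n+2)$-angle on each morphism $c^0\to c^1$ by always choosing the next map $x^i\to c^{i+1}$ to be a left $\cT$-approximation, and then take the isomorphism-closure of the resulting diagrams.  This class is too small.  In \cite{GKO} an $(n+2)$-angle is any sequence admitting \emph{some} compatible tower of triangles in $\cD$, with no approximation requirement on the maps $x^i\to c^{i+1}$; the approximation argument is used only to verify (F1)(c), i.e.\ that every morphism can be completed to an $(n+2)$-angle.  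With your restricted $\pentagon$, the rotation axiom (F2) is not clear: rotating a tower built from left approximations need not yield another tower built from left approximations, and two $(n+2)$-angles on the same initial morphism are not in general isomorphic as diagrams, so isomorphism-closure does not repair this.  A related imprecision is the claim that the last auxiliary object is ``canonically isomorphic to $\Sigma^n c^0$''; what actually happens is that the connecting map $c^{n+1}\to\Sigma^n c^0$ is obtained as the composite of the successive connecting morphisms $c^{n+1}\to\Sigma x^{n-1}\to\cdots\to\Sigma^n c^0$, and the Ext-vanishing is used to show that the final cone lands in $\cT$ when one does use approximations.
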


We leave the statement of Theorem \ref{thm:GKO} vague by not
explaining how the $( n+2 )$-angles are defined.  The reader is
referred to \cite[thm.\ 1]{GKO} for the details.

To round off, two lemmas which will be needed later.

\begin{Lemma}
[{Geiss, Keller, and Oppermann \cite[def.\ 2.1 and prop.\ 2.5(a)]{GKO}}]
\label{lem:exact2}
Let $\cC$ be an $( n+2 )$-angulated category with an $( n+2 )$-angle
\[
  c^0
  \rightarrow \cdots
  \rightarrow c^{ n+1 }
  \rightarrow \Sigma^n c^0.
\]
For each $c \in \cC$, the induced sequence
\[
  \cC( c,c^0 )
  \rightarrow \cdots
  \rightarrow \cC( c,c^{ n+1 } )
  \rightarrow \cC( c,\Sigma^n c^0 )
\]
is exact.
\end{Lemma}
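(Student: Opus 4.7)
The plan is to mirror the classical proof that $\Hom(c,-)$ sends distinguished triangles to long exact sequences, adapting it to the $(n+2)$-angulated setting. The main tools are the rotation axiom (F2) and the morphism axiom (F3) from \cite[def.\ 2.1]{GKO}, together with the trivial $(n+2)$-angle $c^0 \stackrel{\id}{\to} c^0 \to 0 \to \cdots \to 0 \to \Sigma^n c^0$, and the fact that $\Sigma^n$ is an automorphism so that $\Sigma^n$ acts bijectively on morphism sets.

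First I would verify that any two consecutive morphisms in an $(n+2)$-angle compose to zero. To do so, compare the trivial $(n+2)$-angle above with the given angle as follows: take as vertical maps $\id_{c^0}$ on the left and $f \colon c^0 \to c^1$ in the next slot. The resulting left square commutes tautologically, so F3 completes it to a morphism of $(n+2)$-angles. In particular the next vertical factors through $c^2$ as $0 \to c^2$, and commutativity of the third square forces the composition $c^0 \to c^1 \to c^2$ to vanish. Applying F2 repeatedly propagates this to every pair of consecutive morphisms in the given angle, so that after applying $\cC(c,-)$ the resulting sequence is at least a complex.

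Next I would establish exactness. By F2 it suffices to prove exactness at one fixed position, say at $\cC(c,c^1)$. Given $\phi \in \cC(c,c^1)$ with $g \circ \phi = 0$ (where $g \colon c^1 \to c^2$), I would rotate the given angle to $c^1 \xrightarrow{g} c^2 \to \cdots \to c^{n+1} \to \Sigma^n c^0 \xrightarrow{\pm \Sigma^n f} \Sigma^n c^1$ and rotate the trivial angle on $c$ correspondingly to $c \to 0 \to \cdots \to 0 \to \Sigma^n c \xrightarrow{\pm \id} \Sigma^n c$. The left square with verticals $\phi$ and $0$ commutes by the hypothesis $g\phi = 0$, so F3 extends it to a morphism of $(n+2)$-angles. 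Since $\Sigma^n$ is an automorphism, the penultimate vertical map $\Sigma^n c \to \Sigma^n c^0$ can be written as $\Sigma^n \psi$ for a unique $\psi \colon c \to c^0$. Commutativity of the final square, combined with $\Sigma^n \phi$ being the last vertical, yields $\Sigma^n(f \psi) = \Sigma^n \phi$ and hence $f \psi = \phi$, giving the desired factorisation.

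The main obstacle is not conceptual but bookkeeping: one must track the $(-1)^n$ signs introduced by F2 under iterated rotation, and verify that the last vertical produced by F3 really equals $\Sigma^n$ applied to the sought lift $\psi$. Once these identifications are in place the argument is a formal transcription of the triangulated case and no new ideas intervene.
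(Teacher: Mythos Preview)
The paper does not give its own proof of this lemma; it simply records the statement with a citation to \cite[def.\ 2.1 and prop.\ 2.5(a)]{GKO}. Your argument is correct and is essentially the standard one (and the one in \cite{GKO}): use the trivial $(n+2)$-angle on $c^0$ together with (F3) to force consecutive composites to vanish, and then use (F2) and (F3) with the trivial $(n+2)$-angle on $c$ to produce the required lift. The sign bookkeeping you flag is indeed the only delicate point, and as you observe the $(-1)^n$ factors coming from (F2) appear symmetrically in the final square and cancel, so the identification $\Sigma^n\phi=\Sigma^n(f\psi)$ goes through cleanly.
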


\begin{Lemma}
\label{lem:precovers}
\begin{enumerate}

  \item  Let $\cM$ be an $n$-abelian category, $\cU \subseteq \cM$ a
    full subcategory.  Consider an $n$-exact sequence
$
  0
  \rightarrow u 
  \stackrel{ \theta }{ \rightarrow } m
  \stackrel{ \mu }{ \rightarrow } v^1
  \rightarrow \cdots
  \stackrel{ \psi }{ \rightarrow } v^n
  \rightarrow 0
$
in $\cM$.

\smallskip

\begin{enumerate}

  \item  For $n = 1$, the sequence satisfies the conditions
in Definition \ref{def:torsion} if and only if $\theta$ is a
$\cU$-precover and $\cM( \cU,v^1 ) = 0$.

\smallskip

  \item For $n \geqslant 2$, the sequence satisfies the conditions in
  Definition \ref{def:torsion} if and only if $\theta$ is a
  $\cU$-precover and the induced map $\cM( \widetilde{ u },\psi )$ is
  surjective for each $\widetilde{ u } \in \cU$.

\end{enumerate}

\smallskip

In either case, ``$\cU$-precover'' can be replaced with
``$\cU$-cover''.

\smallskip

  \item  Let $\cC$ be an $( n+2 )$-angulated category, $\cX \subseteq
    \cC$ a full subcategory.  An $( n+2 )$-angle
$
  x 
  \stackrel{ \xi }{ \rightarrow } c
  \rightarrow y^1
  \rightarrow \cdots
  \rightarrow y^n
  \stackrel{ \varphi }{ \rightarrow } \Sigma^n x
$
in $\cC$ satisfies the conditions in Definition \ref{def:torsion} if
and only if $\xi$ is an $\cX$-precover and the induced map $\cC(
\widetilde{ x },\varphi )$ is zero for each $\widetilde{ x } \in \cX$.

\smallskip

  \item  A torsion class in an $n$-abelian category is closed under
    sums and summands.

\end{enumerate}
\end{Lemma}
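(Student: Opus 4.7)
The uniform strategy for (i) and (ii) is to apply a representable functor $\cM( \tilde{ u }, - )$ or $\cC( \tilde{ x }, - )$ (with $\tilde{ u } \in \cU$, respectively $\tilde{ x } \in \cX$) to the given $n$-exact sequence or $( n+2 )$-angle, and compare the resulting long exact sequence of Hom-groups---provided by Definition \ref{def:n-abelian} for the $n$-exact sequence and by Lemma \ref{lem:exact2} for the $( n+2 )$-angle---with the exactness condition encoded in $\Uexact$ and $\Xexact$. Since exactness at the interior positions of the resulting sequence is automatic, the condition in Definition \ref{def:torsion} reduces to two boundary requirements: injectivity at the leftmost relevant Hom-group, and surjectivity onto the rightmost one.

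For (i)(b), injectivity of $\cM( \tilde{ u }, v^1 ) \to \cM( \tilde{ u }, v^2 )$ is equivalent to $\cM( \tilde{ u }, \mu ) = 0$, which by exactness at $\cM( \tilde{ u }, m )$ is in turn equivalent to $\cM( \tilde{ u }, \theta )$ being surjective, i.e.\ to $\theta$ being a $\cU$-precover; right-exactness at the last position translates into surjectivity of $\cM( \tilde{ u }, \psi )$. For (i)(a) with $n = 1$ there is no $\psi$, and the $\Uexact$ condition collapses to $\cM( \cU, v^1 ) = 0$; this alone forces $\theta$ to be a $\cU$-precover by the same argument, but is listed together with the precover condition for parallelism with (b). Part (ii) is proved identically using Lemma \ref{lem:exact2}: injectivity at $\cC( \tilde{ x }, y^1 )$ gives the precover property for $\xi$, while vanishing of $\cC( \tilde{ x }, \varphi )$ is precisely surjectivity at the last term. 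The replacement of ``$\cU$-precover'' by ``$\cU$-cover'' is immediate in the direction cover-implies-precover; in the opposite direction one applies the standard minimization trick based on split idempotents ([$n$AB0]) to peel off an irrelevant direct summand of $u$ from the $n$-exact sequence.

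For (iii), apply the characterization just established. Assume $u' \oplus u'' \in \cU$ and, by Definition \ref{def:torsion}, choose an $n$-exact sequence
\[
  0 \rightarrow u \stackrel{ \theta }{ \rightarrow } u'
    \rightarrow v^1 \rightarrow \cdots \rightarrow v^n \rightarrow 0
\]
in which $\theta$ is a $\cU$-precover. The projection $p : u' \oplus u'' \to u'$ is a morphism from the $\cU$-object $u' \oplus u''$, hence factors as $p = \theta \circ s$; composing with the inclusion $\iota : u' \hookrightarrow u' \oplus u''$ gives $\theta \circ ( s \circ \iota ) = p \circ \iota = \id_{ u' }$, so $\theta$ is a split epimorphism. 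Since it is also monic by Definition \ref{def:n-abelian}, it is an isomorphism, and $u' \cong u \in \cU$. Closure under sums follows from the same argument applied to $u_1 \oplus u_2$ for $u_1, u_2 \in \cU$, with the two inclusions assembling a right inverse for the precover. The whole argument hinges on the boundary-condition analysis of the long exact Hom-sequence in the previous paragraph; everything else, including the cover replacement and part (iii), is a formal consequence.
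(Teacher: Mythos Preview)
Your treatment of (i), (ii), and (iii) is correct and matches the paper's approach: apply $\cM(\tilde u,-)$ or $\cC(\tilde x,-)$, use the automatic exactness at interior positions, and reduce the $\Uexact$/$\Xexact$ condition to the two boundary requirements. Your argument for (iii) is a concrete unrolling of the paper's Yoneda-style observation that $\theta_*$ is bijective on $\add(\cU)$; both are fine.

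The one place you go wrong is the ``cover replacement'' clause in (i). The statement concerns the \emph{fixed} morphism $\theta$ sitting in the given $n$-exact sequence: one must show that if this $\theta$ is a $\cU$-precover then it is already a $\cU$-cover. Your proposed ``minimization trick'' would produce a \emph{different} morphism $u_0 \to m$ from a summand $u_0$ of $u$, and you would then have to argue that this new morphism still sits in an $n$-exact sequence of the required shape---which is extra work and not what is being asked. The paper's argument is a one-liner you overlooked: $\theta$ is the first morphism in an $n$-exact sequence, hence monic by Definition \ref{def:n-abelian}, and a monic precover is automatically a cover (if $\theta\alpha = \theta$ then $\alpha = \id_u$). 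Note also the paper's remark that in part (ii) the morphism $\xi$ need not be monic, so the analogous upgrade to ``cover'' is \emph{not} claimed there.
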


\begin{proof}
(i)  Let $n \geqslant 2$ and $\widetilde{ u } \in \cU$.  The sequence
\[
  0
  \rightarrow \cM( \widetilde{ u },u )
  \stackrel{ \theta_*}{ \rightarrow } \cM( \widetilde{ u },m )
  \stackrel{ \mu_* }{ \rightarrow } \cM( \widetilde{ u },v^1 )
  \rightarrow \cdots
  \stackrel{ \psi_*}{ \rightarrow } \cM( \widetilde{ u },v^n )
\]
is exact by Definition \ref{def:n-abelian} (indeed, this is even true
for $\widetilde{ u } \in \cM$).  Hence the conditions in Definition
\ref{def:torsion} amount to $\mu_*$ being zero and $\psi_*$ being
surjective for each $\widetilde{ u } \in \cU$.  This is equivalent to
$\theta_*$ and $\psi_*$ being surjective for each $\widetilde{ u } \in
\cU$, that is, to $\theta$ being a $\cU$-precover and $\psi_* = \cM(
\widetilde{ u },\psi )$ being surjective for each $\widetilde{ u } \in
\cU$.

The case $n = 1$ is handled similarly.  Finally, $\theta$ is monic so
it is a $\cU$-precover if and only if it is a $\cU$-cover.

(ii) This is proved in the same style as (i), using Lemma
\ref{lem:exact2}.  Note that $\xi$ is not in general monic, so even if
it is an $\cX$-precover it may not be an $\cX$-cover.

(iii)  Consider the $n$-exact sequence from Definition
\ref{def:torsion} and let $\widetilde{ u } \in \add( \cU )$ be given.
The induced map $\cM( \widetilde{ u },u ) \stackrel{ \theta_*}{
  \rightarrow } \cM( \widetilde{ u },m )$ is injective since $\theta$
is monic, and the proof of part (i) still shows that $\theta_*$ is
surjective.  So $\theta_*$ is bijective whence the restrictions of the
functors $\cM( -,u )$ and $\cM( -,m )$ to $\add( \cU )$ are
equivalent.  In particular, if $m \in \add( \cU )$ then $m \cong u \in
\cU$.
\end{proof}

\section{Wakamatsu's Lemma for $( n+2 )$-angulated categories}
\label{sec:Wakamatsu}

Recall that throughout, $n \geqslant 1$ is a fixed integer.  The
notion of being left closed under $n$-extensions was defined after
Theorem \ref{thm:n-angulated_torsion2} which is itself an immediate
consequence of the following result.

\begin{Lemma}
[$( n+2 )$-Angulated Wakamatsu's Lemma]
\label{lem:Wakamatsu}
Let $\cC$ be an $( n+2 )$-angulated category, $\cX \subseteq \cC$ a
full subcategory left closed under $n$-extensions.  If $x \stackrel{
  \xi }{ \rightarrow } c$ is an $\cX$-cover, then, in each
completion to an $( n+2 )$-angle
\[
  x \stackrel{ \xi }{ \rightarrow } c
  \rightarrow y^1 \rightarrow \cdots \rightarrow y^n
  \stackrel{ \varphi }{ \rightarrow } \Sigma^n x,
\]
we have $y^1 \rightarrow \cdots \rightarrow y^n$ in $\Xexact$.
\end{Lemma}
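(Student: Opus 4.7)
The plan is to invoke Lemma \ref{lem:precovers}(ii): since the hypothesis already supplies an $\cX$-cover (hence an $\cX$-precover) $\xi$, it suffices to prove the remaining half of the criterion, namely
\[
\cC(\widetilde{x},\varphi)=0 \quad \text{for every } \widetilde{x}\in\cX.
\]
So I fix $\widetilde{x}\in\cX$ together with a morphism $\alpha\colon \widetilde{x}\to y^n$, set $\beta := \varphi\alpha\colon \widetilde{x}\to\Sigma^n x$, and aim to show $\beta=0$.

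The main idea is to realise $\beta$ as the connecting morphism of a second $(n+2)$-angle
\[
x \xrightarrow{\kappa} c^1 \to c^2 \to \cdots \to c^n \to \widetilde{x} \xrightarrow{\beta} \Sigma^n x
\]
with $c^1\in\cX$; this is possible because $x,\widetilde{x}\in\cX$ and $\cX$ is left closed under $n$-extensions. I then rotate this angle and the original one so that $\beta$ and $\varphi$ sit in the first position, and apply axiom (F3) of Definition \ref{def:n-angulated} to the commutative square built from $\alpha$ and $\id_{\Sigma^n x}$. Rotating back yields a morphism of $(n+2)$-angles whose outer vertical arrows are $\id_x$ and $\id_{\Sigma^n x}$; the second vertical arrow $g_1\colon c^1\to c$ therefore satisfies $g_1\kappa = \xi$.

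Now the cover hypothesis bites: since $c^1\in\cX$ and $\xi$ is an $\cX$-precover, $g_1$ factors as $g_1=\xi\rho$ for some $\rho\colon c^1\to x$, and then $\xi(\rho\kappa)=\xi$ forces $\rho\kappa\in\End(x)$ to be an automorphism because $\xi$ is actually a \emph{cover}. Hence $\kappa$, and therefore $\Sigma^n\kappa$, is a split monomorphism. Applying $\cC(\widetilde{x},-)$ to the (rotated) second angle, Lemma \ref{lem:exact2} produces the exact piece
\[
\cC(\widetilde{x},\widetilde{x}) \xrightarrow{\beta_*} \cC(\widetilde{x},\Sigma^n x) \xrightarrow{(\Sigma^n\kappa)_*} \cC(\widetilde{x},\Sigma^n c^1),
\]
and the injectivity of $(\Sigma^n\kappa)_*$ forces $\beta_*=0$; evaluating at $\id_{\widetilde{x}}$ gives $\beta=0$, as required.

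The main obstacle is the middle paragraph: axiom (F3) has to be applied at the \emph{last} two positions of the two $(n+2)$-angles rather than the first two, which requires a rotation argument and careful bookkeeping in order to extract the factorisation $g_1\kappa=\xi$. Once that identity is in hand, the cover property (as opposed to the mere precover property) and the injectivity of $(\Sigma^n\kappa)_*$ deliver the conclusion almost mechanically.
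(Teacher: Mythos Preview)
Your proof is correct and follows essentially the same route as the paper's. Both reduce to showing $\cC(\widetilde{x},\varphi)=0$ via Lemma \ref{lem:precovers}(ii), complete the composite $\varphi\alpha$ to an $(n+2)$-angle with first object in $\cX$, map this to the original angle using (F2)--(F3), factor the resulting $g_1\colon c^1\to c$ through the cover $\xi$, and conclude that $\kappa$ is split monic because $\xi$ is a cover; the only cosmetic difference is that the paper finishes by writing the vanishing as $\kappa\circ\Sigma^{-n}\beta=0$ with $\kappa$ left-cancellable, while you phrase the same step as injectivity of $(\Sigma^n\kappa)_*$ on the long exact sequence.
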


\begin{proof}
By Lemma \ref{lem:precovers}(ii) we only need to show
$\cC( \widetilde{x},\varphi ) = 0$ for $\widetilde{ x } \in \cX$.

Let $\widetilde{x} \stackrel{ \widetilde{\xi} }{ \rightarrow } y^n$ be
given.  We must show $\varphi\widetilde{\xi} = 0$.  Using axioms
(F1)(c), (F2), and (F3) of \cite[def.\ 2.1]{GKO} gives a
commutative diagram where the first row is a completion of
$\varphi\widetilde{\xi}$ to an $( n+2 )$-angle.
\[
  \xymatrix {
    x \ar^{\psi}[r] \ar@{=}[d] & c^0 \ar[r] \ar^{\gamma}[d]
      & c^1 \ar[r] \ar[d] & \cdots \ar[r] & c^{ n-1 } \ar[r] \ar[d]
      & \widetilde{x} \ar^{\varphi\widetilde{\xi}}[r] \ar_{\widetilde{\xi}}[d] 
      & \Sigma^n x \ar@{=}[d] \\
    x \ar_{\xi}[r] & c \ar[r] & y^1 \ar[r] & \cdots \ar[r]
      & y^{ n-1 } \ar[r] & y^n \ar_{\varphi}[r] & \Sigma^n x
            }
\]
Since $\cX$ is left closed under $n$-extensions, we can assume $c^0
\in \cX$ whence there exists $c^0 \stackrel{ \theta }{ \rightarrow }
x$ such that $\xi\theta = \gamma$.  Hence $\xi \circ \theta\psi =
\gamma\psi = \xi$ whence $\theta\psi$ is an isomorphism since $\xi$
is a cover.

It follows from Lemma \ref{lem:exact2} that the composition
of two consecutive morphisms in an $( n+2 )$-angle is zero, so $\psi
\circ \Sigma^{-n}( \varphi\widetilde{\xi} ) = 0$ and hence $\theta\psi
\circ \Sigma^{-n}( \varphi\widetilde{\xi} ) = 0$.  Since $\theta\psi$
is an isomorphism this shows $\Sigma^{-n}( \varphi\widetilde{\xi} ) =
0$ whence $\varphi\widetilde{\xi} = 0$ as desired.
\end{proof}

\section{$n$-representation finite algebras and the associated
  $n$-abelian and $( n+2 )$-angulated categories}
\label{sec:categories}

Recall that throughout, $n \geqslant 1$ is a fixed integer.

\begin{Setup}
\label{set:blanket}
In the rest of the paper, $k$ is an algebraically closed field,
$\Lambda$ is a fixed finite dimensional $k$-algebra, $\mod( \Lambda )$
is the category of finitely generated left $\Lambda$-modules, and
$\Db( \mod\,\Lambda )$ is the bounded derived category.

We assume that $\Lambda$ is weakly $n$-representation finite, that is,
there is an $n$-cluster tilting object $M \in \mod( \Lambda )$, see
\cite[def.\ 2.2]{IO} or Definition \ref{def:cluster_tilting}.  Such an
$M$ is known as an $n$-cluster tilting left $\Lambda$-module.  We also
assume that $\gldim( \Lambda ) \leqslant n$ and that $\Lambda$ (viewed
as a left $\Lambda$-module) and $M$ are basic, that is, without
repeated direct summands.  Note that $M$ is unique by \cite[thm.\
1.6]{I1}.

Let $\cM( \Lambda ) = \add(M)$ be the $n$-cluster tilting subcategory
of $\mod( \Lambda )$ corresponding to $M$, and let $\cC( \Lambda ) =
\add\,\{\, \Sigma^{in}M \,|\, i \in \BZ \,\}$ be the $n$-cluster
tilting subcategory of $\Db( \mod\,\Lambda )$ constructed in
\cite[thm.\ 1.21]{I1}; see \cite[def.\ 1.1]{I1} and \cite[sec.\
5.1]{KR} or Definition \ref{def:cluster_tilting}.

Since $\Lambda$ is fixed, we henceforth write $\cM$ and $\cC$ instead
of $\cM( \Lambda )$ and $\cC( \Lambda )$.  The category $\cM$ is
$n$-abelian by \cite[thm.\ 3.16]{J}, see Theorem \ref{thm:J}, and
$\cC$ is $( n+2 )$-angulated by \cite[thm.\ 1]{GKO}, see Theorem
\ref{thm:GKO}.  As remarked in the introduction, it seems reasonable
to think of $\cC$ as the $n$-derived category of $\cM$, although we
presently lack the definitions to give this sentence a precise
meaning.  \hfill $\Box$
\end{Setup}

\begin{Lemma}
\label{lem:functorially_finite}
\begin{enumerate}

  \item  Each full subcategory $\cU \subseteq \cM$ closed under sums and
  summands is functorially finite in each of $\cM$, $\mod( \Lambda )$,
  and $\Db( \mod\,\Lambda )$.

\smallskip

  \item  Each full subcategory $\cX \subseteq \cC$ closed under sums
    and summands is functorially finite in each of $\cC$ and
    $\Db( \mod\,\Lambda )$.

\end{enumerate}
\end{Lemma}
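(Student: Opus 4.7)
The plan is to reduce everything to the observation that, thanks to $\gldim\Lambda\leqslant n$, all the relevant $\Hom$ and $\Ext$ groups are finite-dimensional $k$-vector spaces, and then to build pre(co)vers by the standard ``sum over a basis'' construction.

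First I would record the shape of the subcategories. Since $M$ is basic with only finitely many indecomposable summands, any $\cU\subseteq\cM=\add(M)$ closed under sums and summands has the form $\cU=\add(N)$ for $N$ a direct summand of $M$; in particular $N$ is a finite-dimensional $\Lambda$-module. For part (ii), any $\cX\subseteq\cC$ closed under sums and summands is of the form $\cX=\add(T)$, where $T$ is a set of indecomposables of the form $\Sigma^{in}M_j$ for $i\in\BZ$ and $M_j$ an indecomposable summand of $M$; here $T$ may be infinite in general.

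Next I would assemble the finiteness inputs. For any $X\in\mod(\Lambda)$, $\Hom_{\Lambda}(N,X)$ and $\Hom_{\Lambda}(X,N)$ are finite-dimensional over $k$ because $\Lambda$, $N$ and $X$ are all finite-dimensional. For $X\in\Db(\mod\,\Lambda)$, the assumption $\gldim\Lambda\leqslant n$ together with the boundedness of $X$ forces $\Hom_{\Db}(N,X[i])=\Ext^{i}(N,X)$ to be finite-dimensional and to vanish for all but finitely many $i$; the same holds with $N$ and $X$ interchanged. Applied with $N$ replaced by any $t=\Sigma^{in}M_j\in T$, this tells us that only finitely many $t\in T$ can satisfy $\Hom_{\Db}(t,X)\neq0$ (only finitely many $i$, and only finitely many $M_j$), and each such $\Hom_{\Db}(t,X)$ is finite-dimensional. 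The analogous statement holds for $\Hom_{\Db}(X,t)$.

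With these inputs in hand, the construction of pre(co)vers is routine. For (i), pick a basis $f_{1},\ldots,f_{d}$ of $\Hom(N,X)$ (in the relevant ambient category) and form the morphism $N^{d}\to X$ with components $f_{i}$; any morphism $N^{m}\to X$ decomposes componentwise into the basis, so this is a $\cU$-precover, and dually $X\to N^{e}$ built from a basis of $\Hom(X,N)$ is a $\cU$-preenvelope. For (ii), enumerate the finitely many $t_{1},\ldots,t_{k}\in T$ with $\Hom_{\Db}(t_{\ell},X)\neq0$, take bases of those finite-dimensional spaces, and assemble a morphism $\bigoplus_{\ell}t_{\ell}^{d_{\ell}}\to X$; any morphism from an object of $\cX$ to $X$ decomposes through a finite sum of $t_{\ell}$'s and then through this map, yielding an $\cX$-precover. $\cX$-preenvelopes are built dually.

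The only step that is more than bookkeeping is the finiteness claim for $T$: one must check that, for a fixed bounded $X$, the set of $i\in\BZ$ with $\Ext^{-in}(M_{j},X)\neq0$ is finite. This follows from $\gldim\Lambda\leqslant n$ by taking a projective resolution of $M_{j}$ of length $\leqslant n$ and computing the hyperext via a Cartan--Eilenberg or truncation argument against the bounded cohomology of $X$; this is the only point where the global dimension hypothesis is truly used, and it is the main (mild) obstacle in the argument.
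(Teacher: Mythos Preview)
Your proof is correct and follows essentially the same approach as the paper's, which simply observes that $\cM=\add(M)$ has finitely many indecomposables (for (i)) and that finite global dimension forces each object of $\Db(\mod\,\Lambda)$ to have nonzero morphisms to or from only finitely many indecomposables of $\cC$ (for (ii)). You spell out the standard ``sum over a basis'' construction of pre(co)vers that the paper leaves implicit.
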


\begin{proof}
(i)  Immediate since $\cM = \add( M )$ has only finitely many
indecomposable objects. 

(ii)  Since $\cM$ has only finitely many indecomposable objects and
since $\Lambda$ has finite global dimension, each object in $\Db(
\mod\,\Lambda )$ has non-zero morphisms only to and only from finitely
many indecomposable objects in $\cC$.  This implies the claim.
\end{proof}

\section{Property $(E)$}
\label{sec:E}

Recall that throughout, $n \geqslant 1$ is a fixed integer and we are
working under Setup \ref{set:blanket}.  We think of the following as
an $n$-homological version of the property that $\cU \subseteq \cM$ is
closed under quotients and extensions.  Indeed, it is equivalent to
this for $n = 1$ under mild assumptions as we will show in Proposition
\ref{pro:1}.

\begin{Definition}
[Property $(E)$]
\label{def:E}
Let $\cU \subseteq \cM$ be a full subcategory.  Consider a triangle 
\[
  \Sigma^{ -n }u'' \oplus m''
  \rightarrow u'
  \rightarrow e
  \rightarrow \Sigma^{ -n+1 }u'' \oplus \Sigma m''
\]
in $\Db( \mod\,\Lambda )$ where $u',u'' \in \cU$ and $m'' \in \cM$.

If, for each such triangle, the object $e \in \Db( \mod\,\Lambda )$ has
an $\cM$-preenvelope $e \rightarrow u$ with $u \in \cU$, then we say
that $\cU$ {\em has Property $(E)$}.
\hfill $\Box$
\end{Definition}

The main purpose of this section is to show that Property $(E)$ is
equivalent to the following which is sometimes handier to use.

\begin{Definition}
[Property $(F)$]
\label{def:F}
Let $\cU \subseteq \cM$ be a full subcategory.  Consider a triangle
\[
  \Sigma^{ -n }u''
  \rightarrow b'
  \rightarrow f
  \rightarrow \Sigma^{ -n+1 }u''
\]
in $\Db( \mod\,\Lambda )$ where $u'' \in \cU$ and where $b' \in \mod(
\Lambda )$ is a quotient in $\mod( \Lambda )$ of a module $u' \in
\cU$. 

If, for each such triangle, the object $f \in \Db( \mod\,\Lambda )$ has
an $\cM$-preenvelope $f \rightarrow u$ with $u \in \cU$, then we say
that $\cU$ {\em has Property $(F)$}.
\hfill $\Box$
\end{Definition}

\begin{Remark}
\label{rmk:truncation_triangle}
Recall that each object $d \in \Db( \mod\,\Lambda )$ has a standard
truncation triangle
\[
  \tau^{ \leqslant -1 }d \rightarrow d \rightarrow \tau^{ \geqslant 0 }d
  \rightarrow \Sigma \tau^{ \leqslant -1 }d
\]
which is determined up to isomorphism by the properties that $\tau^{
  \leqslant -1 }d$ has cohomology concentrated in degrees $\leqslant
-1$ and $\tau^{ \geqslant 0 }d$ has cohomology concentrated in degrees
$\geqslant 0$.  \hfill $\Box$
\end{Remark}

\begin{Lemma}
\label{lem:2}
Let $d \in \Db( \mod\,\Lambda )$ and $m \in \cM$ be fixed objects.
There is an $\cM$-preenvelope of the form $d \rightarrow m$ if and
only if there is an $\cM$-preenvelope of the form $\tau^{ \geqslant 0 }d
\rightarrow m$.
\end{Lemma}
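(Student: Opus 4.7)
The plan is to reduce the claim to a single natural isomorphism at the level of Hom-groups, namely
\[
  \eta_d^{*}\colon \Hom_{\Db(\mod\,\Lambda)}(\tau^{\geqslant 0}d, m')
  \xrightarrow{\ \sim\ } \Hom_{\Db(\mod\,\Lambda)}(d, m')
\]
for every $m' \in \cM$, where $\eta_d\colon d \to \tau^{\geqslant 0}d$ is the truncation morphism from Remark \ref{rmk:truncation_triangle}. Once this is in hand, being a preenvelope is just a question of lifting factorizations through $\eta_d^{*}$, and the lemma becomes formal.

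To establish the isomorphism, I would apply the cohomological functor $\Hom(-,m')$ to the truncation triangle
\[
  \tau^{\leqslant -1}d \rightarrow d \rightarrow \tau^{\geqslant 0}d \rightarrow \Sigma\tau^{\leqslant -1}d
\]
to obtain the exact sequence
\[
  \Hom(\Sigma\tau^{\leqslant -1}d, m')
  \rightarrow \Hom(\tau^{\geqslant 0}d, m')
  \xrightarrow{\eta_d^{*}} \Hom(d, m')
  \rightarrow \Hom(\tau^{\leqslant -1}d, m').
\]
The outer terms vanish by the defining orthogonality of the standard t-structure on $\Db(\mod\,\Lambda)$: the module $m'$ belongs to the heart $\mod(\Lambda)$, while $\tau^{\leqslant -1}d$ lies in $\Db(\mod\,\Lambda)^{\leqslant -1}$, so $\Hom(\tau^{\leqslant -1}d,m')=0$ and similarly $\Hom(\Sigma\tau^{\leqslant -1}d,m') = \Hom(\tau^{\leqslant -1}d, \Sigma^{-1}m')=0$ since $\Sigma^{-1}m' \in \Db(\mod\,\Lambda)^{\geqslant 1}$. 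This gives the bijection, and it is clearly natural in $m' \in \cM$.

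For the forward direction, suppose $\alpha\colon d \to m$ is an $\cM$-preenvelope, and let $\beta\colon \tau^{\geqslant 0}d \to m$ be the unique morphism with $\alpha = \beta\eta_d$ afforded by the isomorphism. Given any $\beta'\colon \tau^{\geqslant 0}d \to m'$ with $m' \in \cM$, set $\alpha' = \beta'\eta_d$. The preenvelope property for $\alpha$ yields $\gamma\colon m \to m'$ with $\gamma\alpha = \alpha'$, hence $(\gamma\beta)\eta_d = \beta'\eta_d$, and the injectivity of $\eta_d^{*}$ at $m'$ forces $\gamma\beta = \beta'$. For the reverse direction, if $\beta\colon \tau^{\geqslant 0}d \to m$ is a preenvelope, define $\alpha = \beta\eta_d$; given $\alpha'\colon d \to m'$ with $m' \in \cM$, surjectivity of $\eta_d^{*}$ gives $\beta'\colon \tau^{\geqslant 0}d \to m'$ with $\alpha' = \beta'\eta_d$, and then any factorization $\beta' = \gamma\beta$ produced by $\beta$ immediately yields $\alpha' = \gamma\alpha$.

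There is no real obstacle to this argument beyond checking the two vanishing statements for the t-structure; the only subtle point is that the bijection must be natural in the target $m'$, since we need it to transport factorizations, and this is built into the construction via composition with $\eta_d$.
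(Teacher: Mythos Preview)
Your proof is correct and follows essentially the same approach as the paper: both apply $\Hom(-,m')$ to the truncation triangle and use the vanishing of the outer terms to get a natural bijection $\Hom(\tau^{\geqslant 0}d,m') \cong \Hom(d,m')$ for all $m' \in \cM$. The paper simply stops at this bijection and says ``This implies the lemma,'' whereas you spell out explicitly how the preenvelope property transfers in each direction; your extra detail is helpful but not a different idea.
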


\begin{proof}
For $\widetilde{m} \in \cM$, the triangle from Remark
\eqref{rmk:truncation_triangle} induces an exact sequence
\[
  \Hom_{ \Db }( \Sigma\tau^{ \leqslant -1 }d,\widetilde{m} )
  \rightarrow \Hom_{ \Db }( \tau^{ \geqslant 0 }d,\widetilde{m} )
  \rightarrow \Hom_{ \Db }( d,\widetilde{m} )
  \rightarrow \Hom_{ \Db }( \tau^{ \leqslant -1 }d,\widetilde{m} ).
\]
The cohomology of $\tau^{ \leqslant -1 }d$ is concentrated in degrees $\leqslant
-1$ while the cohomology of $\widetilde{m}$ is in degree $0$, so the
two outer terms in the sequence are $0$.  This implies the lemma.
\end{proof}

\begin{Lemma}
\label{lem:e_tau_f}
Let
\[
  \xymatrix @+0.5pc {
    \Sigma^{ -n }u'' \oplus m'' \ar^-{( \varphi'',\mu'')}[r]
    & u' \ar[r]
    & e \ar[r] 
    & \Sigma^{ -n+1 }u'' \oplus \Sigma m''
           }
\]
be a triangle in $\Db( \mod\,\Lambda )$ with $m'', u',u'' \in \mod(
\Lambda )$.  Set $b' = \Coker \mu''$ and consider the right exact
sequence
\[
  m''
  \stackrel{ \mu'' }{ \longrightarrow } u'
  \stackrel{ \mu' }{ \longrightarrow } b'
  \longrightarrow 0.
\]

Then each triangle of the form
\[
  \Sigma^{ -n }u''
  \stackrel{ \mu'\varphi'' }{ \longrightarrow } b'
  \longrightarrow f
  \longrightarrow \Sigma^{ -n+1 }u''
\]
satisfies $f \cong \tau^{ \geqslant 0 }e$.
\end{Lemma}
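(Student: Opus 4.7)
The plan is to construct a morphism from the first triangle to the second and extract from it an isomorphism $\tau^{\geqslant 0}e \xrightarrow{\sim} f$ in $\Db(\mod\,\Lambda)$. Since $\mu'\mu'' = 0$ by definition of $b'$, we have
\[
  \mu' \circ (\varphi'',\mu'') \;=\; (\mu'\varphi'',0) \;=\; \mu'\varphi'' \circ \pi_1,
\]
where $\pi_1 \colon \Sigma^{-n}u'' \oplus m'' \to \Sigma^{-n}u''$ is projection onto the first summand. Axiom (F3) then extends this commutative square to a morphism of triangles whose third vertical component is some $\alpha \colon e \to f$.

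Next I would apply the $3\times 3$-lemma for triangulated categories to obtain a triangle $\operatorname{cone}(\pi_1) \to \operatorname{cone}(\mu') \to \operatorname{cone}(\alpha) \to \Sigma\operatorname{cone}(\pi_1)$. Here $\operatorname{cone}(\pi_1) \cong \Sigma m''$ (split epi with kernel $m''$) and $\operatorname{cone}(\mu') \cong \Sigma\Image\mu''$ (surjection in $\mod(\Lambda)$ with kernel $\Image\mu''$). Tracking how $(\varphi'',\mu'')$ sends $\ker\pi_1 = m''$ to $\ker\mu' = \Image\mu''$ identifies the induced map $\Sigma m'' \to \Sigma\Image\mu''$ as the suspension of the canonical surjection $m'' \twoheadrightarrow \Image\mu''$; the kernel of this surjection is $\ker\mu''$, so $\operatorname{cone}(\alpha) \cong \Sigma^2 \ker\mu''$, concentrated in cohomological degree $-2$.

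The long exact cohomology sequence of $e \xrightarrow{\alpha} f \to \Sigma^2\ker\mu'' \to \Sigma e$ now forces $H^i(\alpha)$ to be an isomorphism for every $i \geqslant 0$. Separately, the long exact sequence of the triangle defining $f$ gives $H^i(f) = 0$ for $i < 0$, because both $b'$ and $\Sigma^{-n}u''$ have cohomology in non-negative degrees (using $n \geqslant 1$); hence $\Hom(\tau^{\leqslant -1}e, f) = 0$ by the orthogonality inherent in the standard $t$-structure on $\Db(\mod\,\Lambda)$. Applying $\Hom(-,f)$ to the truncation triangle of $e$ then factors $\alpha$ uniquely as $\beta \circ (e \to \tau^{\geqslant 0}e)$ for some $\beta \colon \tau^{\geqslant 0}e \to f$. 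For $i \geqslant 0$, $H^i(\beta)$ coincides with $H^i(\alpha)$ under the canonical isomorphism $H^i(e) \cong H^i(\tau^{\geqslant 0}e)$, and for $i < 0$ both sides vanish, so $\beta$ is a quasi-isomorphism, hence an isomorphism in $\Db(\mod\,\Lambda)$.

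The main technical obstacle is the identification of the connecting map $\Sigma m'' \to \Sigma\Image\mu''$ between cones in the $3 \times 3$-lemma as the suspension of the natural surjection; this requires carefully tracing how the chain maps $\pi_1, \mu'$ of the morphism of triangles restrict to the honest kernels $m''$ and $\Image\mu''$, since the $3 \times 3$-lemma only guarantees existence of the triangle of cones and not a preferred description of this map.
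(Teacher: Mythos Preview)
Your argument is correct and takes a genuinely different route from the paper. The paper applies the octahedral axiom twice: first to the composable pair $m'' \hookrightarrow \Sigma^{-n}u''\oplus m'' \xrightarrow{(\varphi'',\mu'')} u'$, obtaining a triangle $\Sigma^{-n}u'' \to q' \to e$ with $q' = \operatorname{cone}(\mu'')$; then to the composable pair $\Sigma^{-n}u'' \to q' \to b'$ (using the truncation triangle $\Sigma\Ker\mu'' \to q' \to b'$), obtaining directly a triangle $\Sigma\Ker\mu'' \to e \to f$ which is recognised as the standard truncation triangle of $e$. Your approach instead constructs a comparison map $\alpha\colon e \to f$, bounds the cohomology of its cone via the $3\times 3$ lemma, and then factors $\alpha$ through the truncation. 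The paper's route is a little more direct (no separate quasi-isomorphism check), while yours makes the role of the degree estimates more transparent.

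One simplification worth noting: the ``main technical obstacle'' you flag is unnecessary. You do not need to identify the map $\Sigma m'' \to \Sigma\operatorname{Im}\mu''$ as the suspension of the natural surjection. Both objects are concentrated in cohomological degree $-1$, so the cone of \emph{any} morphism between them has cohomology in degrees $\leqslant -1$; this already forces $H^i(\alpha)$ to be an isomorphism for all $i\geqslant 0$, and the rest of your argument goes through unchanged. Two small remarks: what you call ``Axiom (F3)'' is, in the paper's conventions, an axiom for $(n+2)$-angulated categories --- here you just mean TR3 in $\Db(\mod\,\Lambda)$; and to be safe you should invoke the $3\times 3$ lemma on the commutative square itself (which \emph{produces} a suitable $\alpha$ together with the triangle of cones) rather than on an $\alpha$ chosen beforehand via TR3.
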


\begin{proof}
The octahedral axiom gives a commutative diagram where the rows and
columns are triangles minus the fourth object, and the first vertical
triangle is split.
\[
  \xymatrix{
    m'' \ar@{=}[r] \ar[d] & m'' \ar[r] \ar^{\mu''}[d] & 0 \ar[d] \\
    \Sigma^{ -n }u'' \oplus m'' \ar_-{( \varphi'',\mu'')}[r] \ar[d] & u' \ar[r] \ar^{\pi'}[d] & e \ar@{=}[d]\\
    \Sigma^{ -n }u'' \ar_-{\pi'\varphi''}[r] & q' \ar[r] & e
           }
\]
The long exact cohomology sequence of the second vertical triangle
shows that the only non-zero cohomology of $q'$ is
\begin{align*}
  \H^{-1}(q') & = \Ker \mu'' =: a', \\
  \H^0(q')   & = \Coker \mu'' = b',
\end{align*}
so there is a standard truncation triangle $\Sigma a' \longrightarrow
q' \stackrel{ \rho' }{ \longrightarrow } b'$ where we observe that
$\rho'\pi' = \mu'$.  The octahedral axiom gives a commutative diagram
where the rows and columns are triangles minus the fourth object.
\[
  \xymatrix{
    0 \ar[r] \ar[d] & \Sigma^{ -n }u'' \ar@{=}[r] \ar_{\pi'\varphi''}[d] & \Sigma^{ -n }u'' \ar^{\mu'\varphi''}[d] \\
    \Sigma a' \ar[r] \ar@{=}[d] & q' \ar_{\rho'}[r] \ar[d] & b' \ar[d] \\
    \Sigma a' \ar[r] & e \ar[r] & f
           }
\]

Here the third horizontal triangle is isomorphic to the standard
truncation triangle $\tau^{ \leqslant -1 }e \rightarrow e \rightarrow
\tau^{ \geqslant 0 }e$, proving the lemma.  This holds because the
cohomology of $\Sigma a'$ is concentrated in degrees $\leqslant -1$
and the cohomology of $f$ is concentrated in degrees $\geqslant 0$;
these properties determine the standard truncation triangle up to
isomorphism by Remark \ref{rmk:truncation_triangle}.  To verify the
statement about the cohomology of $f$, use the long exact cohomology
sequence of the third vertical triangle.
\end{proof}

\begin{Lemma}
\label{lem:EeqF}
Let $\cU \subseteq \cM$ be a full subcategory.  Then $\cU$ has
Property $(E)$ $\Leftrightarrow$ $\cU$ has Property $(F)$.
\end{Lemma}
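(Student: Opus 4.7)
The plan hinges on two results established earlier: Lemma \ref{lem:e_tau_f}, which identifies the $f$-object of Property $(F)$ with $\tau^{\geqslant 0}e$ for a suitably matched $e$ from Property $(E)$, and Lemma \ref{lem:2}, which says that $d$ and $\tau^{\geqslant 0}d$ admit $\cM$-preenvelopes into the same target. Together they let me transport an $\cM$-preenvelope of $e$ to one of $f$, and vice versa, as soon as I can match up the data of the two triangles.

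For $(F)\Rightarrow(E)$ the matching is essentially free. Starting from an $(E)$-triangle $\Sigma^{-n}u''\oplus m''\xrightarrow{(\varphi'',\mu'')}u'\rightarrow e\rightarrow\cdots$, I set $b'=\Coker\mu''$, which is a quotient of $u'\in\cU$ in $\mod(\Lambda)$, and complete $\mu'\varphi''\colon\Sigma^{-n}u''\rightarrow b'$ to a triangle whose third term is $f$. Lemma \ref{lem:e_tau_f} gives $f\cong\tau^{\geqslant 0}e$, and Property $(F)$ combined with Lemma \ref{lem:2} then yields the desired $\cM$-preenvelope $e\rightarrow u$ with $u\in\cU$.

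The main obstacle is the reverse direction $(E)\Rightarrow(F)$, where I am handed an arbitrary $(F)$-triangle $\Sigma^{-n}u''\xrightarrow{\alpha}b'\rightarrow f\rightarrow\Sigma^{-n+1}u''$ (with $b'$ a quotient of some $u'\in\cU$) and must manufacture an $(E)$-triangle matching it. I carry this out in two substeps. First I build $\mu''\colon m''\rightarrow u'$ with $\Coker\mu''=b'$ and $m''\in\cM$: letting $K=\Ker(u'\twoheadrightarrow b')$, I choose an epimorphism $m''\twoheadrightarrow K$ with $m''\in\cM$, which is possible because the $n$-cluster tilting subcategory $\cM$ generates $\mod(\Lambda)$, and set $\mu''$ to be the composition $m''\twoheadrightarrow K\hookrightarrow u'$. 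Second I lift $\alpha$ through $\mu'\colon u'\twoheadrightarrow b'$: applying $\Hom_{\Db}(\Sigma^{-n}u'',-)\cong\Ext^n(u'',-)$ to $0\rightarrow K\rightarrow u'\rightarrow b'\rightarrow 0$ shows that $\Ext^n(u'',u')\rightarrow\Ext^n(u'',b')$ is surjective, because the next term $\Ext^{n+1}(u'',K)$ vanishes thanks to $\gldim\Lambda\leqslant n$; this yields $\varphi''\colon\Sigma^{-n}u''\rightarrow u'$ with $\mu'\varphi''=\alpha$. Completing the cone of $(\varphi'',\mu'')\colon\Sigma^{-n}u''\oplus m''\rightarrow u'$ gives an $(E)$-triangle, Property $(E)$ supplies an $\cM$-preenvelope $e\rightarrow u$, and Lemmas \ref{lem:e_tau_f} and \ref{lem:2} transport it to the required $\cM$-preenvelope $f\rightarrow u$. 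The two substeps of this construction rely precisely on the standing hypotheses of Setup \ref{set:blanket}: $\cM$ being $n$-cluster tilting (hence generating) and $\gldim\Lambda\leqslant n$.
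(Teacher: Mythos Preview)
Your proof is correct and follows essentially the same route as the paper's: both directions are handled by matching an $(E)$-triangle with an $(F)$-triangle via Lemma~\ref{lem:e_tau_f} and then transporting the $\cM$-preenvelope through Lemma~\ref{lem:2}. The only cosmetic difference is that in the $(E)\Rightarrow(F)$ direction the paper chooses $m''$ to be a projective module covering $u'$ (hence automatically in $\cM$), whereas you invoke the generating property of $\cM$ to cover $K$ directly; both constructions yield $\Coker\mu''=b'$ and the lifting argument using $\Ext^{n+1}=0$ is identical.
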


\begin{proof}
``$\Rightarrow$'': Consider the triangle from Definition \ref{def:F},
\begin{equation}
\label{equ:E_triangle}
  \Sigma^{ -n }u''
  \stackrel{ \psi'' }{ \longrightarrow } b'
  \longrightarrow f
  \longrightarrow \Sigma^{ -n+1 }u'',
\end{equation}
with $u'' \in \cU$ and $b'$ a quotient of $u' \in \cU$.  We must show
there is an $\cM$-preenvelope $f \rightarrow u$ with $u \in \cU$.

Pick a right exact sequence
\[
  m''
  \stackrel{ \mu'' }{ \longrightarrow } u'
  \stackrel{ \mu' }{ \longrightarrow } b'
  \longrightarrow 0
\]
in $\mod( \Lambda )$ with $m''$ projective and note that $m'' \in \cM$
since $\cM$ is $n$-cluster tilting, see \cite[def.\ 1.1]{I1} or
Definition \ref{def:cluster_tilting}.  Set $k' = \Ker \mu'$.  There is
a short exact sequence $0 \rightarrow k' \rightarrow u' \stackrel{
  \mu' }{ \rightarrow } b' \rightarrow 0$ in $\mod( \Lambda )$ which
induces a triangle $k' \rightarrow u' \stackrel{ \mu' }{ \rightarrow }
b' \rightarrow \Sigma k'$ in $\Db( \mod\,\Lambda )$.  Since $u''$ and
$k'$ are in $\mod( \Lambda )$ we have
$
  \Hom_{ \Db }( \Sigma^{ -n }u'',\Sigma k' )
  \cong \Ext_{ \Lambda }^{ n+1 }( u'',k' ) = 0
$
because $\gldim \Lambda \leqslant n$, so $\Sigma^{ -n }u'' \stackrel{
  \psi'' }{ \longrightarrow } b'$ factors as
\[
  \Sigma^{ -n }u''
  \stackrel{ \varphi'' }{ \longrightarrow } u'
  \stackrel{ \mu' }{ \longrightarrow } b'.
\]

We have constructed morphisms $\mu''$ and $\varphi''$ which can be
used in the first triangle of Lemma \ref{lem:e_tau_f}.  Since
$\mu'\varphi'' = \psi''$, the second triangle of Lemma
\ref{lem:e_tau_f} can be taken to be \eqref{equ:E_triangle} so $f
\cong \tau^{ \geqslant 0 }e$.  By Property $(E)$ there is an
$\cM$-preenvelope $e \rightarrow u$ with $u \in \cU$, and by Lemma
\ref{lem:2} there is hence an $\cM$-preenvelope $f \cong \tau^{
  \geqslant 0 } e \rightarrow u$ with $u \in \cU$ as desired.

``$\Leftarrow$'': Consider the triangle from Definition \ref{def:E}.
To show that $\cU$ has Property $(E)$, it is enough by Lemma
\ref{lem:2} to show that there is an $\cM$-preenvelope $\tau^{
  \geqslant 0 }e \rightarrow u$ with $u \in \cU$.  This holds by Lemma
\ref{lem:e_tau_f} and Property $(F)$.
\end{proof}

For the following result, note that $n = 1$ implies that $\Lambda$ is
a hereditary algebra of finite representation type and that $\cM =
\mod( \Lambda )$.

\begin{Proposition}
\label{pro:1}
Assume $n = 1$ and let $\cU \subseteq \cM$ be a full subcategory
closed under summands.  Then $\cU$ has Property $(E)$ if and only if
it is closed under quotients and extensions.
\end{Proposition}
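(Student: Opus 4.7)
The plan is to use Lemma \ref{lem:EeqF} to replace Property $(E)$ by the equivalent Property $(F)$, and then to unfold what the latter says when $n=1$. The hypothesis $n=1$ with $\gldim \Lambda \leqslant n$ forces $\Lambda$ to be hereditary and $\cM = \mod(\Lambda)$, so the triangle in Definition \ref{def:F} takes the shape
\[
  \Sigma^{-1}u'' \rightarrow b' \rightarrow f \rightarrow u''
\]
in $\Db( \mod\,\Lambda )$, with $u'' \in \cU$ and $b' \in \mod( \Lambda )$ a quotient of some $u' \in \cU$. The long exact cohomology sequence of this triangle forces $f$ to be concentrated in degree $0$ and gives a short exact sequence
\[
  0 \rightarrow b' \rightarrow f \rightarrow u'' \rightarrow 0
\]
in $\mod( \Lambda )$; conversely, every such short exact sequence arises from such a triangle, so the two data biject.

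Next I would observe that because $f \in \cM$, the identity $\id_f$ is itself an $\cM$-preenvelope of $f$. Hence the existence of an $\cM$-preenvelope $f \rightarrow u$ with $u \in \cU$ is equivalent to $\id_f$ factoring as $f \rightarrow u \rightarrow f$, i.e.\ to $f$ being a summand of some $u \in \cU$; by the summand-closure of $\cU$ this is just $f \in \cU$. Combining these two reductions, Property $(F)$ for $n = 1$ reads: for every $u'' \in \cU$, every quotient $b'$ in $\mod( \Lambda )$ of an object $u' \in \cU$, and every extension $0 \rightarrow b' \rightarrow f \rightarrow u'' \rightarrow 0$, one has $f \in \cU$.

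From here the two implications are immediate. For ``$\Leftarrow$'', closure of $\cU$ under quotients puts $b' \in \cU$, and closure under extensions then puts $f \in \cU$. For ``$\Rightarrow$'', to deduce closure under quotients take $u'' = 0$ (which lies in $\cU$ since $\cU$ is summand-closed) and apply $(F)$ to the split sequence $0 \rightarrow b' \rightarrow b' \rightarrow 0 \rightarrow 0$, giving $b' \in \cU$; and to deduce closure under extensions, given $u',u'' \in \cU$ and an extension $0 \rightarrow u' \rightarrow f \rightarrow u'' \rightarrow 0$, take $b' = u'$, which is a quotient of itself, and read off $f \in \cU$.

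The only step that is not essentially bookkeeping is the reduction carried out in the first paragraph: one must verify that for $n=1$ the abstract triangle of Property $(F)$ collapses to an ordinary short exact sequence in $\mod( \Lambda )$, and simultaneously that the existence of an $\cM$-preenvelope landing in $\cU$ collapses to membership in $\cU$. Once those two collapses are established, the equivalence becomes a specialization of $(F)$ to two very simple short exact sequences, so no further obstacles remain.
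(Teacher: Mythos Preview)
Your proposal is correct and follows essentially the same route as the paper: both reduce to Property $(F)$ via Lemma \ref{lem:EeqF}, observe that for $n=1$ the triangle $\Sigma^{-1}u'' \rightarrow b' \rightarrow f \rightarrow u''$ forces $f \in \mod(\Lambda)$ sitting in a short exact sequence $0 \rightarrow b' \rightarrow f \rightarrow u'' \rightarrow 0$, and use $\cM = \mod(\Lambda)$ together with summand-closure of $\cU$ to translate ``$f$ has an $\cM$-preenvelope in $\cU$'' into ``$f \in \cU$''. Your version is actually slightly more explicit than the paper's, which stops at the reformulation of $(F)$ and declares the proposition proved; you spell out the final step (take $u''=0$ to get closure under quotients, take $b'=u'$ to get closure under extensions), which is a harmless and helpful addition.
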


\begin{proof}
By Lemma \ref{lem:EeqF}, we can replace Property $(E)$ by Property
$(F)$ in the proposition.  The triangle in Definition \ref{def:F} has
the form
$
  \Sigma^{ -1 }u''
  \rightarrow b'
  \rightarrow f
  \rightarrow u'',
$
so $f$ is an extension in $\Db( \mod\,\Lambda )$ of the modules $b'$
and $u''$, whence $f \in \mod( \Lambda )$.  Since $\cM = \mod( \Lambda
)$, an $\cM$-preenvelope $f \rightarrow u$ must be split injective, so
since $\cU$ is closed under summands, the preenvelope can be chosen
with $u \in \cU$ if and only if $f \in \cU$.

It follows that $\cU$ has Property $(F)$ if and only if $f \in \cU$
for each extension $0 \rightarrow b' \rightarrow f \rightarrow u''
\rightarrow 0$ in $\mod( \Lambda )$ with $u'' \in \cU$ and $b'$ a
quotient in $\mod( \Lambda )$ of a module $u' \in \cU$.  This proves
the proposition.
\end{proof}

\section{Torsion classes in $n$-abelian categories associated to
  $n$-representation finite algebras}
\label{sec:n-abelian}

Recall that throughout, $n \geqslant 1$ is a fixed integer and we are
working under Setup \ref{set:blanket}.  This section proves Theorem
\ref{thm:B} from the introduction, see Theorem
\ref{thm:n-abelian_torsion}. 

\begin{Lemma}
\label{lem:exact}
An $n$-exact sequence 
$
  0 \rightarrow m^0 \rightarrow \cdots \rightarrow m^{n+1} \rightarrow 0
$
in $\cM$ is an exact sequence in $\mod( \Lambda )$. 
\end{Lemma}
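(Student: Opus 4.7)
The plan is to exploit the fact that both $\Lambda$ and $D\Lambda$ lie in the $n$-cluster tilting subcategory $\cM$, so that the two halves of the $n$-exactness axiom specialise to ordinary exactness in $\mod(\Lambda)$.

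The first step is to verify the memberships $\Lambda, D\Lambda \in \cM$. Since $\cM$ is $n$-cluster tilting, it is generating (Definition \ref{def:cluster_tilting}), so there is an epic $t \twoheadrightarrow \Lambda$ with $t \in \cM$. This epic splits because $\Lambda$ is projective, and $\cM = \add(M)$ is closed under summands, whence $\Lambda \in \cM$. Dually, cogeneration combined with injectivity of $D\Lambda$ yields a split monic $D\Lambda \hookrightarrow t'$ with $t' \in \cM$, placing $D\Lambda \in \cM$.

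Next, apply the covariant functor $\cM(\Lambda,-) = \Hom_\Lambda(\Lambda,-)$ to the given $n$-exact sequence. By Definition \ref{def:n-abelian}, the resulting sequence
\[
  0 \to \cM(\Lambda,m^0) \to \cM(\Lambda,m^1) \to \cdots \to \cM(\Lambda,m^{n+1})
\]
is exact. Since $\Hom_\Lambda(\Lambda,-)$ is naturally isomorphic to the identity functor on $\mod(\Lambda)$, this gives exactness of $0 \to m^0 \to m^1 \to \cdots \to m^{n+1}$ at every position except possibly the rightmost; that is, at $m^0, m^1, \ldots, m^n$.

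Finally, to obtain exactness at $m^{n+1}$, apply the contravariant functor $\cM(-,D\Lambda)$. The dual half of Definition \ref{def:n-abelian} gives an exact sequence
\[
  0 \to \cM(m^{n+1},D\Lambda) \to \cM(m^n,D\Lambda) \to \cdots \to \cM(m^0,D\Lambda),
\]
and under the $k$-linear duality $\Hom_\Lambda(-,D\Lambda) \cong D(-)$, the injectivity at the leftmost position translates to surjectivity of the final morphism $m^n \to m^{n+1}$ in $\mod(\Lambda)$. Combined with the previous step, this yields exactness of the sequence throughout $\mod(\Lambda)$. No substantive obstacle is expected: the whole argument hinges on the two memberships $\Lambda, D\Lambda \in \cM$, which are immediate from the generating and cogenerating properties of $n$-cluster tilting subcategories.
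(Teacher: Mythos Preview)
Your proof is correct and follows essentially the same approach as the paper: use $\Lambda \in \cM$ together with the covariant half of the $n$-exactness definition to get exactness on the left, then the dual argument with $D\Lambda \in \cM$ for exactness on the right. You are more explicit than the paper about why $\Lambda$ and $D\Lambda$ lie in $\cM$ and about how the duality $\Hom_\Lambda(-,D\Lambda)\cong D(-)$ converts the contravariant exactness into surjectivity of the final map, but the underlying idea is identical.
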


\begin{proof}
The projective generator $\Lambda$ of $\mod( \Lambda )$ is in the
$n$-cluster tilting subcategory $\cM$.  Applying the functor $\cM(
\Lambda , - )$ to the $n$-exact sequence and applying Definition
\ref{def:n-abelian} shows that $0 \rightarrow m^0 \rightarrow \cdots
\rightarrow m^{ n+1 }$ is exact.  A dual argument shows that $m^0
\rightarrow \cdots \rightarrow m^{ n+1 } \rightarrow 0$ is exact.
\end{proof}

\begin{Lemma}
\label{lem:Ext_chain}
Assume $n \geqslant 2$ and let 
\[
  v^1
  \stackrel{ \psi^1 }{ \longrightarrow } v^2
  \stackrel{ \psi^2 }{ \longrightarrow } \cdots
  \stackrel{ \psi^{ n-2 } }{ \longrightarrow } v^{ n-1 }
  \stackrel{ \psi^{ n-1 } }{ \longrightarrow } v^n
\]
be an exact sequence in $\mod( \Lambda )$ with $v^1, \ldots, v^{ n-2 }
\in \cM$.  For each $m \in \cM$ there is an isomorphism
\[
  \Ext_{ \Lambda }^{ n-1 }( m,\Ker \psi^1 )
  \cong \Ext_{ \Lambda }^1( m,\Ker \psi^{ n-1 } ).
\]
\end{Lemma}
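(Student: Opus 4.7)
The plan is to chain together short exact sequences extracted from the given complex, and to apply the Ext-vanishing built into the definition of an $n$-cluster tilting subcategory.

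First, for each $i = 1, \ldots, n-2$, exactness of the given sequence at $v^{i+1}$ gives $\Image \psi^i = \Ker \psi^{i+1}$, so factoring $\psi^i$ through its image produces a short exact sequence
\[
0 \longrightarrow \Ker \psi^i \longrightarrow v^i \longrightarrow \Ker \psi^{i+1} \longrightarrow 0
\]
in $\mod( \Lambda )$. Applying $\Hom_{ \Lambda }( m, - )$ gives the long exact Ext sequence
\[
\cdots \to \Ext_{ \Lambda }^{ j-1 }( m,v^i ) \to \Ext_{ \Lambda }^{ j-1 }( m,\Ker \psi^{i+1} ) \to \Ext_{ \Lambda }^{ j }( m,\Ker \psi^i ) \to \Ext_{ \Lambda }^{ j }( m,v^i ) \to \cdots.
\]

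The key input is that $m, v^i \in \cM$ and $\cM$ is $n$-cluster tilting (Definition \ref{def:cluster_tilting}), so $\Ext_{ \Lambda }^{ j }( m,v^i ) = 0$ for every $j \in \{1, \ldots, n-1\}$. Hence, whenever $j$ lies in $\{2, \ldots, n-1\}$, both flanking terms of the segment above vanish and the connecting map promotes to an isomorphism
\[
\Ext_{ \Lambda }^{ j-1 }( m,\Ker \psi^{i+1} ) \cong \Ext_{ \Lambda }^{ j }( m,\Ker \psi^i ).
\]

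Composing these isomorphisms along the pairs $(i,j) = (1,n-1), (2,n-2), \ldots, (n-2,2)$ yields the telescope
\[
\Ext_{ \Lambda }^{ n-1 }( m,\Ker \psi^1 ) \cong \Ext_{ \Lambda }^{ n-2 }( m,\Ker \psi^2 ) \cong \cdots \cong \Ext_{ \Lambda }^{ 1 }( m,\Ker \psi^{ n-1 } ),
\]
which is the desired isomorphism. There is no substantive obstacle; the only thing to check is that throughout the telescope the index $i$ stays in $\{1,\ldots,n-2\}$ (so that $v^i \in \cM$ is guaranteed) and $j$ stays in $\{2,\ldots,n-1\}$ (so that both Ext-vanishings apply), and these ranges match precisely, so the argument is essentially bookkeeping. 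The case $n = 2$ is vacuous since both sides of the desired isomorphism coincide.
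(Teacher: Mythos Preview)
Your proof is correct and follows exactly the approach taken in the paper: break the exact sequence into short exact sequences, apply the long exact $\Ext$ sequence, and use the vanishing $\Ext_\Lambda^i(\cM,\cM)=0$ for $i\in\{1,\ldots,n-1\}$ to chain the resulting isomorphisms. The paper's proof is a one-sentence sketch of precisely this argument; you have simply spelled out the bookkeeping.
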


\begin{proof}
Break the exact sequence into short exact sequences, write down the
induced long exact sequences for $\Ext_{ \Lambda }^i( m,- )$, and use
that $\Ext_{ \Lambda }^i( \cM,\cM ) = 0$ for $i \in \{\, 1,\ldots,n-1
\,\}$ to get isomorphisms between all the $\Ext_{ \Lambda }^{ n-i }(
m,\Ker \psi^i )$. 
\end{proof}

\begin{Lemma}
\label{lem:vanishing}
Let $\cU \subseteq \cM$ be a full subcategory and let
\[
  0
  \longrightarrow u
  \stackrel{ \theta }{ \longrightarrow } m
  \longrightarrow v^1
  \stackrel{ \psi^1 }{ \longrightarrow } v^2
  \longrightarrow \cdots
  \longrightarrow v^{ n-1 }
  \stackrel{ \psi^{ n-1 } }{ \longrightarrow } v^n
  \longrightarrow 0
\]
be an $n$-exact sequence in $\cM$ which has $u \in \cU$ and 
$
  v^1
  \stackrel{ \psi^1 }{ \longrightarrow } v^2
  \longrightarrow \cdots
  \longrightarrow v^{ n-1 }
  \stackrel{ \psi^{ n-1 } }{ \longrightarrow } v^n
$
in $\Uexact$.  Write $c = \Coker
\theta$, where the cokernel is computed in $\mod( \Lambda )$.
Then
\begin{enumerate}

  \item  $\Hom_{ \Lambda }( \cU,c ) = 0$,

\medskip

  \item  $\Ext_{ \Lambda }^{ n-1 }( \cU,c ) = 0$.

\end{enumerate}
\end{Lemma}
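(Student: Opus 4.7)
The plan is to reduce everything to computations in $\mod(\Lambda)$ via Lemma \ref{lem:exact} and then exploit the two inputs we have: the hom-exactness in the definition of $\Uexact$ and the vanishing $\Ext_{\Lambda}^i(\cM,\cM)=0$ for $1\leqslant i\leqslant n-1$ built into the $n$-cluster tilting condition.

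First, by Lemma \ref{lem:exact} the displayed $n$-exact sequence is exact in $\mod(\Lambda)$; in particular $c=\Coker\theta$ coincides with $\Ker\psi^1$. Since $\cM$ is a full subcategory of $\mod(\Lambda)$, the groups $\cM(\widetilde{u},v^i)$ appearing in the $\Uexact$-sequence are just the $\Hom_{\Lambda}(\widetilde{u},v^i)$. For (i), fix $\widetilde{u}\in\cU$. The $\Uexact$-sequence begins $0\to\Hom_{\Lambda}(\widetilde{u},v^1)\to\Hom_{\Lambda}(\widetilde{u},v^2)$, so $\Hom_{\Lambda}(\widetilde{u},\psi^1)$ is injective and hence $\Hom_{\Lambda}(\widetilde{u},\Ker\psi^1)=0$.

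For (ii), the case $n=1$ reduces to (i) because $\Ext^0=\Hom$. For $n\geqslant 2$ I would apply Lemma \ref{lem:Ext_chain} to the sequence $v^1\to v^2\to\cdots\to v^n$ (exact at each middle term by Lemma \ref{lem:exact}) to obtain, for each $\widetilde{u}\in\cU\subseteq\cM$,
\[
  \Ext_{\Lambda}^{n-1}(\widetilde{u},\Ker\psi^1)
  \;\cong\; \Ext_{\Lambda}^1(\widetilde{u},\Ker\psi^{n-1}).
\]
It thus suffices to verify that the right-hand side vanishes. Since $\psi^{n-1}$ is surjective (the $n$-exact sequence ends $\to v^n\to 0$) there is a short exact sequence $0\to\Ker\psi^{n-1}\to v^{n-1}\to v^n\to 0$ in $\mod(\Lambda)$, and applying $\Hom_{\Lambda}(\widetilde{u},-)$ yields
\[
  \Hom_{\Lambda}(\widetilde{u},v^{n-1})\to\Hom_{\Lambda}(\widetilde{u},v^n)\to\Ext_{\Lambda}^1(\widetilde{u},\Ker\psi^{n-1})\to\Ext_{\Lambda}^1(\widetilde{u},v^{n-1}).
\]
Here the first arrow is surjective because the $\Uexact$-sequence ends $\Hom_{\Lambda}(\widetilde{u},v^n)\to 0$, and the last term vanishes since $\widetilde{u},v^{n-1}\in\cM$ and $1\in\{1,\ldots,n-1\}$. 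Hence the middle term is $0$, as required.

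There is no serious obstacle here; the argument is essentially bookkeeping. The only points needing care are treating $n=1$ separately (since Lemma \ref{lem:Ext_chain} does not apply there) and using both ends of the $\Uexact$-sequence: the left-end injectivity of $\cM(\widetilde{u},\psi^1)$ drives (i), while the right-end surjectivity of $\cM(\widetilde{u},\psi^{n-1})$ feeds into the dimension-shift argument that settles (ii).
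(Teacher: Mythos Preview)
Your proof is correct and matches the paper's argument essentially step for step: both use Lemma~\ref{lem:exact} to identify $c$ with $\Ker\psi^1$, read off (i) from the left end of the $\Uexact$-sequence, and for (ii) combine the long exact sequence of $0\to\Ker\psi^{n-1}\to v^{n-1}\to v^n\to 0$ with the dimension shift of Lemma~\ref{lem:Ext_chain}. The only cosmetic difference is that the paper first establishes $\Ext_\Lambda^1(\widetilde{u},\Ker\psi^{n-1})=0$ and then invokes Lemma~\ref{lem:Ext_chain}, whereas you reverse the order.
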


\begin{proof}
(i)  Combine Lemma \ref{lem:exact} with the condition that 
$
  v^1
  \stackrel{ \psi^1 }{ \longrightarrow } v^2
  \longrightarrow \cdots
  \longrightarrow v^{ n-1 }
  \stackrel{ \psi^{ n-1 } }{ \longrightarrow } v^n
$
is in $\Uexact$.

(ii) For $n=1$ this coincides with (i) so assume $n \geqslant 2$.  The
$n$-exact sequence in the lemma is exact in $\mod( \Lambda )$ by Lemma
\ref{lem:exact}.  Let $\widetilde{u} \in \cU$ be given.  The short
exact sequence
$
  0
  \longrightarrow \Ker \psi^{ n-1 }
  \longrightarrow v^{ n-1 }
  \stackrel{ \psi^{ n-1 } }{ \longrightarrow } v^n
  \longrightarrow 0
$
gives an exact sequence 
\[
  \Hom_{ \Lambda }( \widetilde{u},v^{ n-1 } )
  \stackrel{ \psi^{ n-1 }_* }{ \longrightarrow }
  \Hom_{ \Lambda }( \widetilde{u},v^n )
  \longrightarrow
  \Ext_{ \Lambda }^1( \widetilde{u},\Ker \psi^{ n-1 } )
  \longrightarrow
  \Ext_{ \Lambda }^1( \widetilde{u},v^{ n-1 } ).
\]
Here $\psi^{ n-1 }_*$ is surjective because 
$
  v^1
  \stackrel{ \psi^1 }{ \longrightarrow } v^2
  \longrightarrow \cdots
  \longrightarrow v^{ n-1 }
  \stackrel{ \psi^{ n-1 } }{ \longrightarrow } v^n
$
is in $\Uexact$, and $\Ext_{ \Lambda }^1( \widetilde{u},v^{ n-1 } ) =
0$ because $\widetilde{u}$ and $v^{ n-1 }$ are in $\cM$, so $\Ext_{
  \Lambda }^1( \widetilde{u},\Ker \psi^{ n-1 } ) = 0$.  Combining with
the isomorphism in Lemma \ref{lem:Ext_chain} shows $\Ext_{ \Lambda }^{
  n-1 }( \widetilde{u},\Ker \psi^1 ) = 0$.  Finally, $\Ker \psi^1
\cong c$.
\end{proof}

\begin{Lemma}
\label{lem:injective}
Let $\cU \subseteq \cM$ be a full subcategory with Property $(E)$.
If $u \stackrel{ \theta }{ \rightarrow } m$ is a $\cU$-cover of $m
\in \cM$, then $\theta$ is a monomorphism in $\cM$.
\end{Lemma}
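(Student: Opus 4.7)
The strategy is to reduce monicity of $\theta$ in $\cM$ to injectivity of $\theta$ as a $\Lambda$-module homomorphism, which suffices since $\cM \subseteq \mod( \Lambda )$ is full. Set $k = \Ker \theta$ and $b = \Image \theta$ in $\mod( \Lambda )$ and factor $\theta$ as $u \stackrel{ p }{ \twoheadrightarrow } b \stackrel{ i }{ \hookrightarrow } m$. The plan is to manufacture a second $\cU$-object through which $b$ factors into $m$, and then to use the cover property of $\theta$ to force $p$ to be injective.

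To produce that object I would trade Property $(E)$ for the equivalent Property $(F)$ via Lemma \ref{lem:EeqF} and apply $(F)$ with $u' = u$ and $u'' = 0$: the triangle degenerates to $0 \rightarrow b \rightarrow f \rightarrow 0$, so $f \cong b$ and $(F)$ supplies an $\cM$-preenvelope $\epsilon \colon b \rightarrow u^{*}$ with $u^{*} \in \cU$. Since $m \in \cM$, the inclusion $i$ factors as $i = i^{*} \epsilon$ with $i^{*} \colon u^{*} \rightarrow m$, so $\theta = i^{*} ( \epsilon p )$. Because $\theta$ is a $\cU$-precover, $i^{*}$ in turn factors as $i^{*} = \theta q$ for some $q \colon u^{*} \rightarrow u$, yielding $\theta = \theta ( q \epsilon p )$. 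The cover property of $\theta$ then forces $q \epsilon p$ to be an automorphism of $u$, which exhibits $( q \epsilon p )^{ -1 } q$ as a left inverse of $\epsilon p$ in $\mod( \Lambda )$. Hence $\epsilon p$ is split monic, $p$ is injective, $k = 0$, and $\theta$ is a monomorphism.

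The one subtlety is the use of $u'' = 0$ in Property $(F)$, which requires $0 \in \cU$; this is automatic if $\cU$ is closed under sums and summands, as in the paper's context, but otherwise one can take $u'' = u'$ with the zero morphism, obtain $f \cong b \oplus \Sigma^{ -n+1 } u$, and extract the preenvelope of $b$ by restriction to the summand. Conceptually this is the $n$-homological shadow of the classical fact that a cover into a subcategory closed under quotients is monic, with Property $(F)$ playing the role of closure under quotients.
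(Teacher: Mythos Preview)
Your proof is correct and follows essentially the same route as the paper's own argument: factor $\theta$ through its image, invoke Property $(F)$ with $u'' = 0$ to obtain an $\cM$-preenvelope of the image landing in $\cU$, factor the inclusion into $m$ through that preenvelope and then back through the cover $\theta$, and use the cover property to force the surjection $u \twoheadrightarrow b$ to be injective. Your extra remark on whether $0 \in \cU$ is more careful than the paper, which silently uses closure under sums.
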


\begin{proof}
Lemma \ref{lem:EeqF} says that $\cU$ has Property $(F)$.  Let $u
\stackrel{ \theta }{ \rightarrow } m$ be a $\cU$-cover of $m \in \cM$,
and factorise it in $\mod( \Lambda )$ into a surjection followed by an
injection, $u \stackrel{ s }{ \twoheadrightarrow } b' \stackrel{ i }{
  \hookrightarrow } m$.  Property $(F)$ with $u'' = 0$ says that $b'$
has an $\cM$-preenvelope $b' \stackrel{ \beta' }{ \rightarrow } u'$
with $u' \in \cU$.  Factoring $i$ through $\beta'$ gives the
commutative diagram
\[
  \xymatrix {
    u \ar_{\theta}[dr] \ar@{->>}^{s}[r] & b' \ar^{\beta'}[r] \ar@{^{(}->}_<<<{i}[d] & u'\lefteqn{.} \ar^{\psi}[dl] \\
    & m & 
            }
\]
We can factorise $\psi$ through $\theta$, that is, $\psi =
\theta\alpha$.  But then
$\theta \circ \alpha\beta's
  = \psi\beta's
  = is
  = \theta$
whence $\alpha\beta's$ is an isomorphism since $\theta$ is a cover.
In particular, $\alpha\beta's$ is injective in $\mod( \Lambda )$.  It
follows that so are $s$ and $\theta = is$, and then $\theta$ is a
monomorphism in $\cM$.
\end{proof}

\begin{Theorem}
\label{thm:n-abelian_torsion}
Let $\cU \subseteq \cM$ be a full subcategory closed under sums and
summands.  Then
\[
  \mbox{ $\cU$ is a torsion class in $\cM$
         $\Leftrightarrow$
         $\cU$ has Property $(E)$. }
\]
\end{Theorem}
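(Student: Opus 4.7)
The plan is to prove the two implications by reducing Property $(E)$ to the more convenient Property $(F)$ via Lemma \ref{lem:EeqF}, and then invoking the preparatory lemmas of Section \ref{sec:n-abelian}.

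For $\Rightarrow$, given a torsion class $\cU$ and a Property $(F)$ triangle $\Sigma^{-n}u'' \xrightarrow{\delta} b' \xrightarrow{\iota} f \to \Sigma^{-n+1}u''$, I will show that the $\cU$-preenvelope $\eta \colon f \to u_f$ of $f$ in $\Db( \mod\,\Lambda )$ (which exists by Lemma \ref{lem:functorially_finite}(i)) is already an $\cM$-preenvelope. For any $\psi \colon f \to m$ with $m \in \cM$, apply the torsion structure to $m$ to get $0 \to u_m \xrightarrow{\theta_m} m \to v_m^1 \to \cdots \to v_m^n \to 0$, in which $\theta_m$ is a $\cU$-cover and is monic in $\mod\,\Lambda$. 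A short diagram chase using the surjection $u' \twoheadrightarrow b'$ and monicity of $\theta_m$ factors $\psi\iota$ as $\theta_m\widehat\chi$ for some $\widehat\chi \colon b' \to u_m$. The obstruction $\widehat\chi\delta \in \Ext^n_\Lambda( u'', u_m )$ to extending $\widehat\chi$ across $\iota$ satisfies $(\theta_m)_*(\widehat\chi\delta) = \psi\iota\delta = 0$, and $(\theta_m)_*$ is injective on $\Ext^n_\Lambda( u'', - )$ because $\Ext^{n-1}_\Lambda( \cU, c_m ) = 0$ by Lemma \ref{lem:vanishing}(ii); hence $\widehat\chi\delta = 0$ and $\widehat\chi$ extends to $\widetilde\chi \colon f \to u_m$ with $\widetilde\chi\iota = \widehat\chi$. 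The difference $\theta_m\widetilde\chi - \psi$ kills $\iota$ and thus lies in the image of $\Hom_{\Db}( \Sigma^{-n+1}u'', m ) = \Ext^{n-1}_\Lambda( u'', m )$, which vanishes for $n \geqslant 2$; hence $\psi = \theta_m\widetilde\chi$. Now the $\cU$-preenvelope property of $\eta$ factors $\widetilde\chi$, and therefore $\psi$, through $\eta$. The case $n = 1$ is separate and follows from Proposition \ref{pro:1}: closure of a classical torsion class under quotients and extensions places $f$ itself in $\cU$.

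For $\Leftarrow$, given Property $(E)$ and $m \in \cM$, take the $\cU$-cover $\theta \colon u \to m$ provided by Lemma \ref{lem:functorially_finite}(i), which is monic by Lemma \ref{lem:injective}, and use [nAB1]--[nAB2] to complete $\theta$ via its $n$-cokernel to an $n$-exact sequence $0 \to u \xrightarrow{\theta} m \to v^1 \to \cdots \to v^n \to 0$. By Lemmas \ref{lem:exact} and \ref{lem:Ext_chain}, membership of $v^1 \to \cdots \to v^n$ in $\Uexact$ is equivalent to $\Hom_\Lambda( \cU, c ) = 0 = \Ext^{n-1}_\Lambda( \cU, c )$, where $c = \Coker\theta$. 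The $\Hom$-vanishing is direct: any $\widetilde u \to c$ with $\widetilde u \in \cU$ lifts to $\widetilde u \to m$ because $\Ext^1_\Lambda( \cU, u ) = 0$, then factors through the $\cU$-cover $\theta$, and therefore vanishes modulo $u$. For the $\Ext^{n-1}$-vanishing, given $\xi \in \Ext^{n-1}_\Lambda( \widetilde u, c )$, the long exact sequence of $0 \to u \to m \to c \to 0$ injects $\Ext^{n-1}_\Lambda( \widetilde u, c )$ into $\Ext^n_\Lambda( \widetilde u, u )$ (using $\Ext^{n-1}_\Lambda( \cM, \cM ) = 0$); let $\alpha \colon \Sigma^{-n}\widetilde u \to u$ represent the image. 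Since $\alpha$ factors as $\Sigma^{-n}\widetilde u \to \Sigma^{-1}c \xrightarrow{\Sigma^{-1}\partial} u$ through the connecting morphism of the triangle $u \to m \to c \xrightarrow{\partial} \Sigma u$, the composition $\theta\alpha$ consists of two consecutive arrows in the rotated triangle $\Sigma^{-1}c \to u \xrightarrow{\theta} m$, so $\theta\alpha = 0$.

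Form the triangle $\Sigma^{-n}\widetilde u \xrightarrow{\alpha} u \xrightarrow{\iota} e \to \Sigma^{-n+1}\widetilde u$ and apply Property $(E)$ with $u' = u$, $u'' = \widetilde u$, $m'' = 0$ to obtain an $\cM$-preenvelope $\epsilon \colon e \to u_e$ with $u_e \in \cU$. Since $\theta\alpha = 0$, $\theta$ lifts across $\iota$ to some $\widetilde\theta \colon e \to m$ with $\widetilde\theta\iota = \theta$. Factoring $\widetilde\theta$ through the $\cM$-preenvelope $\epsilon$ gives $\widetilde\theta = \sigma\epsilon$ for some $\sigma \colon u_e \to m$, and then factoring $\sigma$ through the $\cU$-cover $\theta$ gives $\sigma = \theta\gamma$ for some $\gamma \colon u_e \to u$. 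Thus $\theta = \widetilde\theta\iota = \theta\gamma\epsilon\iota$, forcing $\gamma\epsilon\iota = \id_u$ by monicity of $\theta$. Hence $\iota$ is a split monomorphism, the triangle splits, $\alpha = 0$, and $\xi = 0$, as required. The main obstacle is this last splitting argument: extracting from the mere existence of the $\cM$-preenvelope $\epsilon$ a retraction of $\iota$ rests on the precise combination of the identity $\theta\alpha = 0$, the $\cU$-cover universality of $\theta$, and the monicity of $\theta$.
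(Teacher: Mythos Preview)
Your proof is correct and follows essentially the same strategy as the paper's, relying on the same preparatory lemmas (\ref{lem:functorially_finite}, \ref{lem:exact}, \ref{lem:Ext_chain}, \ref{lem:vanishing}, \ref{lem:injective}) and, in the $\Leftarrow$ direction, forming the very same auxiliary triangle $\Sigma^{-n}\widetilde u \to u \to e \to \Sigma^{-n+1}\widetilde u$ and exploiting the $\cM$-preenvelope furnished by Property~$(E)$ together with the $\cU$-cover $\theta$.

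There are a few organisational differences worth recording. For $\Rightarrow$ the paper works directly with Property~$(E)$: it takes an $\cM$-preenvelope $e \to m$, applies the torsion structure to $m$, and uses Lemma~\ref{lem:vanishing} to show that the induced map $\Sigma^{-n+1}u'' \oplus \Sigma m'' \to c$ vanishes, whence $e \to m$ factors through $u$. You instead pass to Property~$(F)$ via Lemma~\ref{lem:EeqF}, start from a $\cU$-preenvelope of $f$, and show it is already an $\cM$-preenvelope; your argument trades the paper's single morphism-of-triangles diagram for an explicit lifting-and-obstruction computation in $\Ext^n$. Both routes hinge on the same vanishing $\Ext^{n-1}_\Lambda(\cU,c_m)=0$. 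For $\Leftarrow$ the paper concludes by showing the element $\sigma \in \Hom_{\Db}(\Sigma^{-n+1}u'',c)$ factors through $\Hom_{\Db}(\Sigma u,c)=0$, whereas you deduce that $\iota$ is split monic and hence $\alpha=0$; your endgame is arguably a touch more direct since it avoids the extra factorisation step. One small presentational point: your injectivity claim ``$\Ext^{n-1}_\Lambda(\widetilde u,c) \hookrightarrow \Ext^n_\Lambda(\widetilde u,u)$ using $\Ext^{n-1}_\Lambda(\cM,\cM)=0$'' is only valid for $n\geqslant 2$; for $n=1$ this reads $\Ext^0_\Lambda(\cM,\cM)=0$, which is false, but the case is already covered by your direct $\Hom$-vanishing argument, so no harm is done.
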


\begin{proof}
``$\Rightarrow$'': Consider the triangle from Definition \ref{def:E},
\[
  \Sigma^{ -n }u'' \oplus m''
  \rightarrow u'
  \rightarrow e
  \rightarrow \Sigma^{ -n+1 }u'',
\]
with $u',u'' \in \cU$ and $m'' \in \cM$.  By Lemma
\ref{lem:functorially_finite}(i) there is an $\cM$-preenvelope $e
\stackrel{ \varphi }{ \rightarrow } m$.

Since $\cU$ is a torsion class, there is an $n$-exact sequence in
$\cM$,
\[
  0
  \rightarrow u
  \stackrel{ \theta }{ \rightarrow } m
  \rightarrow v^1
  \rightarrow \cdots
  \rightarrow v^n
  \rightarrow 0,
\]
with $u \in \cU$ and $v^1 \rightarrow \cdots \rightarrow v^n$ in
$\Uexact$.  The sequence is exact in $\mod( \Lambda )$ by Lemma
\ref{lem:exact}.  Write $c = \Coker \theta$, where the cokernel is
computed in $\mod( \Lambda )$.  There is a triangle
\[
  \Sigma^{ -1 }c \rightarrow u 
  \stackrel{ \theta }{ \rightarrow } m \rightarrow c
\]
in $\Db( \mod\,\Lambda )$ and we can construct the following commutative
diagram. 
\[
  \xymatrix {
    \Sigma^{ -n }u'' \oplus m'' \ar[r] \ar_-{ \Sigma^{-1}\sigma }[d] & u' \ar[r] \ar_{\psi'}[d] & e \ar[r] \ar^{\varphi}[d] & \Sigma^{ -n+1 }u'' \oplus \Sigma m'' \ar^{ \sigma }[d] \\
    \Sigma^{ -1 }c \ar[r] & u \ar_{\theta}[r] & m \ar[r] & c
            }
\]
The morphism $\psi'$ exists because the composition $u' \rightarrow e
\stackrel{ \varphi }{ \rightarrow } m \rightarrow c$ is zero since
$\Hom_{\Lambda}( u',c ) = 0$ by Lemma \ref{lem:vanishing}(i).  The
morphism $\sigma$ is obtained by completing the diagram to a morphism
of triangles.

However, $\sigma = 0$ because $\Hom_{ \Db }( \Sigma^{ -n+1 }u'',c )
\cong \Ext_{ \Lambda }^{n-1}( u'',c ) = 0$ by Lemma
\ref{lem:vanishing}(ii) and $\Hom_{ \Db }( \Sigma m'',c ) = 0$ since
$m''$ and $c$ are in $\mod( \Lambda )$.  Hence $e \stackrel{ \varphi }{
  \rightarrow } m$ can be factored as $e \stackrel{ \varphi' }{
  \rightarrow } u \stackrel{ \theta }{ \rightarrow } m$, and then $e
\stackrel{ \varphi' }{ \rightarrow } u$ is an $\cM$-preenvelope with
$u \in \cU$, showing that $\cU$ has Property $(E)$.

``$\Leftarrow$'': Let $m \in \cM$ be given and let $u \stackrel{
  \theta }{ \rightarrow } m$ be a $\cU$-cover which exists by Lemma
\ref{lem:functorially_finite}(i).  By Lemma \ref{lem:injective} the
morphism $\theta$ is a monomorphism in $\cM$, so by \cite[def.\
3.1(A2)]{J} or Definition \ref{def:n-abelian}, item [$n$AB2], we can
complete it to the $n$-exact sequence
\[
  0
  \longrightarrow u
  \stackrel{ \theta }{ \longrightarrow } m
  \longrightarrow v^1
  \stackrel{ \psi^1 }{ \longrightarrow } v^2
  \longrightarrow \cdots
  \longrightarrow v^{ n-1 }
  \stackrel{ \psi^{ n-1 } }{ \longrightarrow } v^n
  \longrightarrow 0
\]
in $\cM$.  Set $c = \Coker \theta$ and let $u'' \in \cU$ be given.

For $n \geqslant 2$, by Lemma \ref{lem:precovers}(i) it remains to
show $\cM( u'',\psi^{ n-1 } )$ surjective.  The short exact sequence
$
  0
  \longrightarrow \Ker \psi^{ n-1 }
  \longrightarrow v^{ n-1 }
  \stackrel{ \psi^{ n-1 }}{ \longrightarrow } v^n
  \longrightarrow 0
$
means that it is sufficient to show $\Ext_{ \Lambda }^1( u'',\Ker
\psi^{ n-1 } ) = 0$, and combining Lemmas \ref{lem:exact} and
\ref{lem:Ext_chain} shows that this is equivalent to $\Ext_{ \Lambda
}^{ n-1 }( u'',\Ker \psi^1 ) = 0$.  Since $\Ker \psi^1 \cong c$ this
amounts to $\Hom_{ \Db }( \Sigma^{ -n+1 }u'',c ) = 0$.

For $n=1$, by Lemma \ref{lem:precovers}(i) it remains to show
$\Hom_{\Lambda}( u'',v^1 ) = 0$.  Since we have $v^1 \cong c$ in this
case, this again amounts to $\Hom_{ \Db }( \Sigma^{ -n+1 }u'',c ) =
0$.

So let $\Sigma^{ -n+1 }u'' \stackrel{ \sigma }{ \rightarrow } c$ be
given.  There is a triangle $\Sigma^{-1}c \rightarrow u \stackrel{
  \theta }{ \rightarrow } m \stackrel{ \mu }{ \rightarrow } c$ in
$\Db( \mod\, \Lambda )$, and we can spin it into a commutative diagram
where the rows are triangles, the first one continued by an additional
step.
\[
  \xymatrix {
    \Sigma^{ -n }u'' \ar^-{\Sigma^{-1}\delta}[r] \ar_-{ \Sigma^{-1}\sigma }[d] & u \ar[r] \ar@{=} [d] & e \ar^-{\alpha}[r] \ar^{\varphi}[d] & \Sigma^{ -n+1 }u'' \ar^{ \sigma }[d] \ar^-{\delta}[r] & \Sigma u\\
    \Sigma^{ -1 }c \ar[r] & u \ar_{\theta}[r] & m \ar_{\mu}[r] & c
            }
\]

Property $(E)$ says that there is an $\cM$-preenvelope $e \stackrel{
  \widetilde{ \varphi }}{ \rightarrow } \widetilde{ u }$ with
$\widetilde{ u } \in \cU$, and we can augment the diagram with
factorisations as follows where $\gamma$ exists because $\theta$ is a
$\cU$-cover. 
\[
  \xymatrix @-0.5pc{
    \Sigma^{ -n }u'' \ar^-{\Sigma^{-1}\delta}[rr]\ar[rr] \ar_-{ \Sigma^{-1}\sigma }[dd] && u \ar[rr] \ar@{=} [dd] && e \ar^-{\alpha}[rr] \ar^{\varphi}[dd] \ar_{\widetilde{\varphi}}[ld]&& \Sigma^{ -n+1 }u'' \ar^{ \sigma }[dd] \ar^-{\delta}[rr] && \Sigma u\\
    &&& \widetilde{u} \ar^<<<{\beta}[dr] \ar_<<<{\gamma}[dl] \\
    \Sigma^{ -1 }c \ar[rr] && u \ar_{\theta}[rr] && m \ar_{\mu}[rr] && c
            }
\]
Hence $\mu\varphi = \mu\beta\widetilde{\varphi} =
\mu\theta\gamma\widetilde{\varphi} = 0 \circ \gamma\widetilde{\varphi}
= 0$ whence $\sigma\alpha = 0$.  This implies that $\sigma$ factors as
$\Sigma^{ -n+1 }u'' \stackrel{ \delta }{ \rightarrow } \Sigma u
\rightarrow c$, but $\Hom_{ \Db }( \Sigma u,c ) = 0$ since
$u$ and $c$ are in $\mod( \Lambda )$.  We conclude $\sigma = 0$
as desired.
\end{proof}

\section{Torsion classes in $( n+2 )$-angulated categories associated to
  $n$-representation finite algebras}
\label{sec:n-angulated}

Recall that throughout, $n \geqslant 1$ is a fixed integer and we are
working under Setup \ref{set:blanket}.  The following is Theorem
\ref{thm:A} from the introduction.  The notion of being left closed
under $n$-extensions was defined after Theorem
\ref{thm:n-angulated_torsion2}.

\begin{Theorem}
\label{thm:n-angulated_torsion}
Let $\cX \subseteq \cC$ be a full subcategory closed under sums and
summands.  Then $\cX$ is a torsion class in $\cC$ $\Leftrightarrow$
$\cX$ is left closed under $n$-extensions.
\end{Theorem}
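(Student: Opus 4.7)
The implication $(\Leftarrow)$ follows immediately from Theorem \ref{thm:n-angulated_torsion2}. Indeed, Lemma \ref{lem:functorially_finite}(ii) tells us that any full subcategory of $\cC$ closed under sums and summands is functorially finite, hence in particular covering; combined with the assumption that $\cX$ is left closed under $n$-extensions, Theorem \ref{thm:n-angulated_torsion2} yields that $\cX$ is a torsion class.

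For the harder direction $(\Rightarrow)$, suppose $\cX$ is a torsion class and fix $x', x'' \in \cX$ together with a morphism $x'' \stackrel{\delta}{\rightarrow} \Sigma^n x'$.  First, using the rotation and completion axioms of $(n+2)$-angulated categories applied to $\Sigma^{-n}\delta$, produce any $(n+2)$-angle
\[
  x' \stackrel{g^0}{\rightarrow} C \rightarrow c^2 \rightarrow \cdots \rightarrow c^n \rightarrow x'' \stackrel{\delta}{\rightarrow} \Sigma^n x'.
\]
Next, because $\cC$ is Krull--Schmidt and $\cX$ is functorially finite by Lemma \ref{lem:functorially_finite}(ii), the object $C$ admits an $\cX$-cover $x \stackrel{\xi}{\rightarrow} C$ with $x \in \cX$, and the torsion class hypothesis provides a completion
\[
  x \stackrel{\xi}{\rightarrow} C \rightarrow y^1 \rightarrow \cdots \rightarrow y^n \stackrel{\varphi}{\rightarrow} \Sigma^n x
\]
in which the sequence $y^1 \rightarrow \cdots \rightarrow y^n$ lies in $\Xexact$. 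Since $\xi$ is an $\cX$-precover and $x' \in \cX$, the morphism $g^0$ factors through $\xi$, yielding $\tilde g : x' \rightarrow x$ with $g^0 = \xi \tilde g$.

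The plan is now to apply the $(n+2)$-angulated octahedral axiom (F4) of \cite{GKO} to the composition $x' \stackrel{\tilde g}{\rightarrow} x \stackrel{\xi}{\rightarrow} C = g^0$. This should produce a new $(n+2)$-angle of the form
\[
  x' \stackrel{\tilde g}{\rightarrow} x \rightarrow \tilde c^{\,2} \rightarrow \cdots \rightarrow \tilde c^{\,n} \rightarrow x'' \stackrel{\delta}{\rightarrow} \Sigma^n x'
\]
with the same outer data ($x'$ at position $0$, $x''$ at position $n+1$, and $\delta$ as the final connecting morphism), but whose first non-trivial term is the $\cX$-object $x$ rather than $C$. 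Setting $c^1 := x \in \cX$ then exhibits the desired completion of $\delta$, proving that $\cX$ is left closed under $n$-extensions.

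The main technical obstacle is this last step: one must verify that axiom (F4), when applied to the factorisation $g^0 = \xi \tilde g$, really delivers an $(n+2)$-angle whose terminal object and connecting morphism coincide with $x''$ and $\delta$ respectively, rather than some other completion of $\delta$. This is where the hint about ``minimal $(n+2)$-angles'' from the introduction enters: the minimality of the cover $\xi$, available because $\cC$ is Krull--Schmidt, ensures that the octahedral construction aligns the outer data correctly and avoids introducing spurious non-$\cX$ summands along the way.
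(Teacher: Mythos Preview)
Your $(\Leftarrow)$ direction is fine and matches the paper exactly.

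Your $(\Rightarrow)$ direction, however, has a genuine gap precisely where you flag it. Axiom (F4) applied to the factorisation $g^0 = \xi\tilde g$ does \emph{not} produce an $(n+2)$-angle of the form $x' \to x \to \tilde c^{\,2} \to \cdots \to x'' \stackrel{\delta}{\to} \Sigma^n x'$. What (F4) gives is a relationship between chosen $(n+2)$-angles on $\tilde g$, $\xi$, and $g^0$ via a mapping-cone construction; it does not let you ``transplant'' the terminal data $(x'',\delta)$ from the $(n+2)$-angle of $g^0$ onto an $(n+2)$-angle of $\tilde g$. Already for $n=1$ the octahedron shows that the cone of $\tilde g$ differs from $x''$ by an extension involving the cone of $\xi$, so there is no reason it should equal $x''$. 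Your closing sentence about minimality of the cover $\xi$ is a hope rather than an argument, and in fact you never use the $\cX$-exact property of $y^1 \to \cdots \to y^n$, which is the entire content of the torsion-class hypothesis beyond $\xi$ being a precover.

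The paper's argument is quite different and this is where ``minimal $(n+2)$-angles'' actually enter. One completes $\delta$ to an $(n+2)$-angle $x' \stackrel{\xi'}{\to} c^1 \stackrel{\gamma}{\to} c^2 \to \cdots \to x'' \stackrel{\delta}{\to} \Sigma^n x'$ in which $\gamma$ lies in the radical of $\cC$; this is possible by \cite[lem.\ 5.18]{OT}. One then takes the torsion $(n+2)$-angle $x \stackrel{\xi}{\to} c^1 \to d^2 \to \cdots \to d^{n+1} \to \Sigma^n x$ for the object $c^1$, and uses (F3) to build a morphism from the first $(n+2)$-angle to the second which is the identity on $c^1$. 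The $\cX$-exactness of $d^2 \to \cdots \to d^{n+1}$ forces the component $x'' \to d^{n+1}$ of this morphism to factor through $\psi : d^n \to d^{n+1}$, and from there one constructs a chain homotopy $\sigma^\bullet$ with $\id_{c^1} = \sigma^2\gamma + \xi\sigma^1$. Since $\gamma$ is radical, $\id_{c^1} - \sigma^2\gamma$ is invertible, so $\sigma^1 : c^1 \to x$ is a split monomorphism and $c^1 \in \cX$. Thus the minimality is used on the \emph{given} $(n+2)$-angle (to make $\gamma$ radical), not on the cover $\xi$, and the conclusion is that $c^1$ was already in $\cX$ rather than that it can be replaced by some other $\cX$-object.
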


\begin{proof}
``$\Rightarrow$'': If $n=1$ then ``torsion class'' has the usual
meaning, see \cite[def.\ 2.2]{IY}, and ``closed under $1$-extensions''
means ``closed under extensions'', so the result holds by \cite[the
lines after def.\ 2.2]{IY}.

Assume $n \geqslant 2$.  Given a morphism $x'' \stackrel{ \delta }{
  \rightarrow } \Sigma^n x'$ with $x',x'' \in \cX$, we can complete to
an $( n+2 )$-angle in $\cC$,
\[
  x' \stackrel{ \xi' }{ \rightarrow }
  c^1 \stackrel{ \gamma }{ \rightarrow }
  c^2 \rightarrow \cdots \rightarrow c^n
  \rightarrow x'' \stackrel{ \delta }{ \rightarrow } \Sigma^n x',
\]
where $\gamma$ can be assumed to be in the radical of $\cC$ by
\cite[lem.\ 5.18 and its proof]{OT}.  We will show $c^1 \in \cX$.

Since $\cX$ is a torsion class, there is an $( n+2 )$-angle
\[
  x
  \stackrel{ \xi }{ \rightarrow } c^1
  \rightarrow d^2
  \rightarrow \cdots
  \rightarrow d^n 
  \stackrel{ \psi }{ \rightarrow } d^{ n+1 }
  \rightarrow \Sigma^n x
\]
in $\cC$ with $x \in \cX$ and $d^2 \rightarrow \cdots \rightarrow d^{
  n+1 }$ in $\Xexact$.  By Lemma \ref{lem:precovers}(ii) the morphism
$\xi'$ factors through $\xi$ so we can use axiom (F3) of
\cite[def.\ 2.1]{GKO} to get the following commutative diagram.
\[
  \xymatrix {
    x' \ar^{\xi'}[r] \ar[d] & c^1 \ar^{\gamma}[r] \ar@{=}[d]
      & c^2 \ar[r] \ar[d] & \cdots \ar[r] & c^n \ar[r] \ar[d]
      & x'' \ar^{\delta}[r] \ar^{\xi''}[d] 
      & \Sigma^n x' \ar[d] \\
    x \ar_{\xi}[r] & c^1 \ar[r] & d^2 \ar[r] & \cdots \ar[r]
      & d^n \ar_{ \psi }[r] & d^{n+1} \ar[r] & \Sigma^n x
            }
\]

Since $d^2 \rightarrow \cdots \rightarrow d^{ n+1 }$ is in
$\Xexact$, the morphism $\xi''$ factors through $\psi$.  Combining
this with Lemma \ref{lem:exact2}, we can construct a
chain homotopy as follows.
\[
  \xymatrix @C=3.5pc @R=1.75pc { 
    x' \ar^{\xi'}[r] \ar[d] & c^1 \ar^{\gamma}[r] \ar@{=}[d] \ar_{\sigma^1}[dl]
      & c^2 \ar[r] \ar[d] \ar_{\sigma^2}[dl] & \cdots \ar[r] & c^n \ar[r] \ar[d]
      & x'' \ar^{\xi''}[d] \ar_{\sigma^{n+1}}[dl] \\
    x \ar_{\xi}[r] & c^1 \ar[r] & d^2 \ar[r] & \cdots \ar[r]
      & d^n \ar_{ \psi }[r] & d^{n+1}
                     }
\]
In particular, $\id_{ c^1 } = \sigma^2\gamma + \xi\sigma^1$ whence
$\xi\sigma^1 = \id_{ c^1 } - \sigma^2\gamma$.  The right hand side is
invertible because $\gamma$ is in the radical of $\cC$, so $\sigma^1$
is a split injection and $c^1 \in \cX$ follows.

``$\Leftarrow$'':  In view of Lemma \ref{lem:functorially_finite}(ii),
this holds by Theorem \ref{thm:n-angulated_torsion2}.
\end{proof}

\begin{Example}
\label{exa:extension_closed}
We could also consider the condition that $\cX$ is {\em closed under
  $n$-extensions}, meaning that, for each morphism $x'' \stackrel{
  \delta }{ \rightarrow } \Sigma^n x'$ with $x', x'' \in \cX$, there
is an $( n+2 )$-angle 
$
  x'
  \rightarrow c^1
  \rightarrow c^2
  \rightarrow \cdots
  \rightarrow c^n
  \rightarrow x''
  \stackrel{ \delta }{ \rightarrow }
  \Sigma^n x'
$
with {\em each} $c^i$ in $\cX$.  However, this is strictly stronger
than $\cX$ being left closed under $n$-extensions.  For instance, let
$Q$ denote the quiver $1 \stackrel{ \alpha }{ \longrightarrow } 2
\stackrel{ \beta }{ \longrightarrow } 3$ and let $\Gamma = kQ / (
\beta\alpha )$.  The
Auslander--Reiten quiver of $\mod( \Gamma )$ is the following, where
indecomposable modules are denoted by the vertices where they are
supported. 
\[
  \xymatrix @-1pc {
    & {\mbox{$\begin{array}{c} 2 \\[-1mm] 3 \end{array}$}} \ar[dr] & & \mbox{$\begin{array}{c} 1 \\[-1mm] 2 \end{array}$} \ar[dr] & \\
    3 \ar[ur] & & 2 \ar[ur] & & 1
                  }
\]
Note that $3$ is a projective module, $1$ an injective module, while
$\begin{array}{c} 2 \\[-1mm] 3 \end{array}$ and $\begin{array}{c} 1
  \\[-1mm] 2 \end{array}$ are projective and injective.  The algebra
$\Gamma$ is weakly $2$-representation finite with $2$-cluster til\-ting
subcategory
$
  \cM( \Gamma ) = \add \{\,
  3,
  \begin{array}{c} 2 \\[-1mm] 3 \end{array},
  \begin{array}{c} 1 \\[-1mm] 2 \end{array},
  1
                        \,\}
$
inside $\mod( \Gamma )$.  We have $\gldim( \Gamma ) = 2$ and
$
  \cC( \Gamma ) = \add\,\{\, \Sigma^{2i}\cM( \Gamma ) \,|\, i \in \BZ \,\}
$
is a $2$-cluster tilting subcategory of $\Db( \mod\,\Gamma )$; see
Setup \ref{set:blanket}.  

It is not hard to check that
$\cX = \add \{\, 3, \begin{array}{c} 2 \\[-1mm] 3 \end{array}, 1 \,\}$,
viewed as a subcategory of $\cC( \Gamma )$, is left closed under
$2$-extensions.  However, it is not closed under $2$-extensions
because the $2$-extension represented by the $2$-exact sequence
$
  0
  \rightarrow 3
  \rightarrow \begin{array}{c} 2 \\[-1mm] 3 \end{array}
  \rightarrow \begin{array}{c} 1 \\[-1mm] 2 \end{array}
  \rightarrow 1
  \rightarrow 0
$
gives the $4$-angle
$
  3
  \rightarrow \begin{array}{c} 2 \\[-1mm] 3 \end{array}
  \rightarrow \begin{array}{c} 1 \\[-1mm] 2 \end{array}
  \rightarrow 1
  \rightarrow \Sigma^2( 3 )
$
in $\cC( \Gamma )$, and there is no way to get rid of 
$\begin{array}{c} 1 \\[-1mm] 2 \end{array}$ as a direct
summand of the third term.
\hfill $\Box$
\end{Example}

\section{Bijection between intermediate aisles and
  $n$-abelian torsion classes}
\label{sec:correspondence}

Recall that throughout, $n \geqslant 1$ is a fixed integer and we are
working under Setup \ref{set:blanket}.  This section proves Theorem
\ref{thm:C} from the introduction, see Theorem \ref{thm:intermediate}.

\begin{Definition}
\label{def:C_leq}
For $\ell \in \BZ$ set
$
  \cC^{ \leqslant \ell n }
  = \add \{\, \Sigma^{ in }\cM \,|\, i \geqslant -\ell \,\}.
$
In other words, $\cC^{ \leqslant \ell n }$ is the part of $\cC$ in
cohomological degrees $\leqslant \ell n$.
\hfill $\Box$
\end{Definition}

\begin{Remark}
It is not hard to check directly that each $\cC^{\leqslant \ell n}$
is an aisle in $\cC$.  This is also a consequence of Theorem
\ref{thm:intermediate} below.  \hfill $\Box$
\end{Remark}

\begin{Definition}
\label{def:XU}
If $\cU \subseteq \cM$ is a full subcategory then set
$
  \cX( \cU ) = \add \bigl( \cU \cup \cC^{ \leqslant -n } \bigr).
$
\hfill $\Box$
\end{Definition}

\begin{Remark}
By construction, $\cX( \cU )$ is ``intermediate'' in the sense $\cC^{
  \leqslant -n } \subseteq \cX( \cU ) \subseteq \cC^{ \leqslant 0}$.  Moreover,
$\Sigma^n \cX( \cU ) \subseteq \cX( \cU )$, so $\cX( \cU )$ has a shot
at being what we can reasonably call an intermediate aisle.
\hfill $\Box$
\end{Remark}

Indeed, the following is the main result of this section.

\begin{Theorem}
\label{thm:intermediate}
The assignment $\cU \mapsto \cX( \cU )$ defines a bijection
\[
  \biggl\{
    \cU \subseteq \cM
  \,\bigg|\,
      \cU \mbox{ is a torsion class}
  \biggr\}
  \rightarrow
  \biggl\{
    \cX \subseteq \cC
  \,\bigg|
    \begin{array}{ll}
      \mbox{$\cX$ is an aisle with} \\
      \cC^{\leqslant -n} \subseteq \cX \subseteq \cC^{\leqslant 0} \\
    \end{array}
  \biggr\}.
\]
\end{Theorem}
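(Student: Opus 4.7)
The plan is to verify that $\cU \mapsto \cX( \cU )$ and $\cX \mapsto \cX \cap \cM$ are mutually inverse.  The structural backbone is the fact that every indecomposable of $\cC$ has the form $\Sigma^{ in }m$ for some $i \in \BZ$ and indecomposable $m \in \cM$, lying in $\cM$ iff $i = 0$ and in $\cC^{ \leqslant -n }$ iff $i \geqslant 1$.  Hence $\cM \cap \cC^{ \leqslant -n } = 0$, yielding $\cX( \cU ) \cap \cM = \cU$ at the level of indecomposables, and $\cX \subseteq \cC^{ \leqslant 0 }$ forces every indecomposable of an intermediate $\cX$ to lie either in $\cM$ (hence in $\cX \cap \cM$) or in $\cC^{ \leqslant -n } \subseteq \cX$, giving $\cX( \cX \cap \cM ) = \cX$.

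For well-definedness of the forward map, let $\cU$ be a torsion class.  Closure of $\cX( \cU )$ under sums, summands, and $\Sigma^n$ (using $\Sigma^n \cM \subseteq \cC^{ \leqslant -n }$ and $\Sigma^n \cC^{ \leqslant -n } \subseteq \cC^{ \leqslant -n }$) is immediate, as are the inclusions $\cC^{ \leqslant -n } \subseteq \cX( \cU ) \subseteq \cC^{ \leqslant 0 }$.  For the torsion property, decompose any $c \in \cC$ as $c = c_{-} \oplus c_0 \oplus c_{+}$ where $c_{-} \in \cC^{ \leqslant -n }$, $c_0 \in \cM$, and $c_{+}$ is supported in cohomological degrees $\geqslant n$.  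Build the torsion $(n+2)$-angle piecewise: the trivial angle $c_{-} \stackrel{ \id }{ \rightarrow } c_{-} \rightarrow 0 \rightarrow \cdots \rightarrow 0 \rightarrow \Sigma^n c_{-}$ handles $c_{-}$; the $n$-exact sequence supplied by $\cU$ applied to $c_0$, promoted to an $(n+2)$-angle, handles $c_0$; and the rotated trivial angle $0 \rightarrow c_{+} \rightarrow c_{+} \rightarrow 0 \rightarrow \cdots \rightarrow 0 \rightarrow 0$ handles $c_{+}$.  Taking the direct sum yields the required $(n+2)$-angle for $c$; the $\Xexact$ condition on the cofibre pieces reduces in each case to vanishing of $\Hom$ groups, following from $\gldim \Lambda \leqslant n$ and $\Ext^i_{ \Lambda }( \cM,\cM ) = 0$ for $1 \leqslant i \leqslant n - 1$.

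For well-definedness of the inverse map, let $\cX$ be an intermediate aisle and set $\cU = \cX \cap \cM$; closure under sums and summands is clear.  To prove $\cU$ is a torsion class, I would verify Property $(E)$ and invoke Theorem \ref{thm:n-abelian_torsion}.  The principal input is the $\cX$-torsion $(n+2)$-angle $x \to m \to y^1 \to \cdots \to y^n \to \Sigma^n x$ attached to $m \in \cM \subseteq \cC$, where I take $\xi \colon x \to m$ to be an $\cX$-cover (possible by Lemma \ref{lem:functorially_finite}(ii)).  The vanishing $\Hom_{ \cC }( \cC^{ \leqslant -n },\cM ) = 0$ implies that the restriction of $\xi$ to any $\cC^{ \leqslant -n }$-summand of $x$ is zero, and minimality of the cover forces such a summand to be trivial, so $x \in \cU$.

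The main obstacle is then to lift the $(n+2)$-angle to an $n$-exact sequence $0 \to x \to m \to v^1 \to \cdots \to v^n \to 0$ in $\cM$ with the $v^i$'s in $\Uexact$.  I would attack this by first showing that $\xi$ is a monomorphism in $\mod( \Lambda )$, imitating the proof of Lemma \ref{lem:injective}: factor $\xi = i \circ s$ with $s \colon x \twoheadrightarrow b'$ surjective and $i \colon b' \hookrightarrow m$ injective, produce a $\cU$-valued $\cM$-preenvelope of $b'$ using the aisle structure of $\cX$, and conclude via the factorisation trick that $s$ is an isomorphism.  Once $\xi$ is monomorphic, its $n$-cokernel in the $n$-abelian category $\cM$ completes to the required $n$-exact sequence, and uniqueness of the associated $(n+2)$-angle given its first two terms identifies $v^i = y^i \in \cM$.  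The $\Uexact$ condition on the $v^i$'s then follows from the $\Xexact$ condition by restricting test objects from $\cX$ to $\cU \subseteq \cX$.  The heart of the difficulty lies in producing the $\cU$-valued preenvelope of $b'$ from the aisle structure of $\cX$ alone, without circularly invoking Property $(E)$.
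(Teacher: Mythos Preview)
Your forward direction is correct and genuinely different from the paper's argument.  The paper never builds the torsion $(n+2)$-angle for a general $c \in \cC$ directly; instead it reduces, via Theorems~\ref{thm:n-abelian_torsion} and~\ref{thm:n-angulated_torsion}, to the equivalence ``$\cU$ has Property~$(E)$ $\Leftrightarrow$ $\cX(\cU)$ is left closed under $n$-extensions'', and proves each implication by octahedral manipulations in $\Db(\mod\,\Lambda)$.  Your decomposition $c = c_{-} \oplus c_0 \oplus c_{+}$ and the direct sum of three explicit $(n+2)$-angles is more elementary and bypasses Property~$(E)$ entirely for this direction; the vanishing checks you sketch ($\Hom(\cC^{\leqslant -n},\cM)=0$, $\Hom(\cC^{\leqslant 0},c_{+})=0$) are exactly what is needed.

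Your backward direction, however, does not close.  You have correctly located the gap: to imitate Lemma~\ref{lem:injective} you need an $\cM$-preenvelope of $b' = \operatorname{Im}\xi$ landing in $\cU$, and the aisle axioms for $\cX$ give you $(n+2)$-angles in $\cC$, not preenvelopes of arbitrary objects of $\mod(\Lambda)$.  There is no evident way to extract this without essentially reproving Property~$(E)$, which is circular.  A secondary problem is your appeal to ``uniqueness of the associated $(n+2)$-angle given its first two terms'' to identify $v^i$ with $y^i$: $(n+2)$-angles are not determined by their first morphism, so this step is unjustified.  The paper's fix is precisely to go through the characterisations of Theorems~\ref{thm:n-abelian_torsion} and~\ref{thm:n-angulated_torsion}: once ``$\cX(\cU)$ torsion class'' is replaced by ``$\cX(\cU)$ left closed under $n$-extensions'', one can start from a morphism $u'' \oplus \Sigma^n m'' \to \Sigma^n u'$ in $\cX(\cU)$, complete to an $(n+2)$-angle with first term in $\cX(\cU)$, and then use two octahedra to strip off the $\cC^{\leqslant -n}$-pieces and land on the triangle of Definition~\ref{def:E} with the required $\cU$-valued preenvelope.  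That octahedral reduction is the missing idea.
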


\begin{proof}
It is clear that $\cU \mapsto \cX( \cU )$ defines a bijection 
\[
  \Biggl\{
    \cU \subseteq \cM
  \,\Bigg|
    \begin{array}{ll}
      \cU \mbox{ is a full subcategory} \\
      \mbox{closed under sums and} \\
      \mbox{summands}
    \end{array}
  \Biggr\}
  \rightarrow
  \Biggl\{
    \cX \subseteq \cC
  \,\Bigg|
    \begin{array}{ll}
      \mbox{$\cX$ is a full subcategory closed} \\
      \mbox{under sums and summands} \\
      \mbox{with }
      \cC^{\leqslant -n} \subseteq \cX \subseteq \cC^{\leqslant 0}
    \end{array}
  \Biggr\}.
\]
To prove the theorem, we must hence let $\cU \subseteq \cM$ be a full
subcategory closed under sums and summands and show that $\cU$ is a
torsion class if and only if $\cX( \cU )$ is an aisle.
Since $\Sigma^n \cX( \cU ) \subseteq \cX( \cU )$ holds by
construction, it is enough to show that $\cU$ is a torsion class if
and only if $\cX( \cU )$ is a torsion class.  By Theorems
\ref{thm:n-abelian_torsion} and \ref{thm:n-angulated_torsion} this
amounts to 
\begin{equation}
\label{equ:tors_eq}
  \mbox{
    $\cU$ has Property $(E)$ $\Leftrightarrow$ 
    $\cX( \cU )$ is left closed under $n$-extensions,
       }
\end{equation}
and we will show the two implications.

``$\Rightarrow$'' in Equation \eqref{equ:tors_eq}: Let $x'' \stackrel{
  \delta }{ \rightarrow } \Sigma^n x'$ be a morphism with $x',x'' \in
\cX( \cU )$.  We must show that there is an $( n+2 )$-angle in $\cC$,
\[
  x'
  \rightarrow c^1
  \rightarrow c^2
  \rightarrow \cdots
  \rightarrow c^n
  \rightarrow x''
  \stackrel{ \delta }{ \rightarrow }
  \Sigma^n x',
\]
with $c^1 \in \cX( \cU )$.  According to the construction in
\cite[proof of thm.\ 1]{GKO}, we can get $c^1$ by constructing a
triangle
\begin{equation}
\label{equ:triangle}
  \Sigma^{ -n }x''
  \stackrel{ \Sigma^{ -n }\delta }{ \longrightarrow } x'
  \longrightarrow d
  \longrightarrow \Sigma^{ -n+1 }x''
\end{equation}
in $\Db( \mod\,\Lambda )$ and letting $d \rightarrow c^1$ be a
$\cC$-preenvelope.  Note that because
\[
  \cC = \add \{\, \Sigma^{ in }\cM \,|\, i \in \BZ \,\},
\]
we can let
$
  c^1 = \bigoplus_{i \in \BZ} \Sigma^{ in } m( i )
$
where $m( i ) \in \cM$ and $d \rightarrow \Sigma^{ in } m( i )$ is a
$(\Sigma^{ in }\cM)$-preenvelope for each $i$.  To show that we can
get $c^1 \in \cX( \cU )$, it is then enough to show that we can let
\begin{enumerate}

  \item  $m( i ) = 0$ for $i \leqslant -1$,

\smallskip

  \item  $m( 0 ) \in \cU$.

\end{enumerate}

Here (i) can be achieved because if $i \leqslant -1$ then $\Hom_{ \Db
}( d,\Sigma^{ in }\cM ) = 0$ for degree reasons.  Specifically, the
long exact cohomology sequence of the triangle \eqref{equ:triangle}
shows that the cohomology of $d$ is concentrated in degrees $\leqslant
n-1$, while the cohomology of an object of $\Sigma^{ in }\cM$ is
concentrated in degree $-in \geqslant n$.

To achieve (ii) we must show that there is an $\cM$-preenvelope $d
\rightarrow m(0)$ with $m(0) \in \cU$.  By definition of $\cX( \cU )$
we have $x' = u' \oplus c'$ and $x'' = u'' \oplus c''$ with $u',u''
\in \cU$ and $c',c'' \in \cC^{ \leqslant -n }$.  The octahedral axiom gives
a commutative diagram where the rows and columns are triangles
minus the fourth object, the second vertical triangle is split, and
the second horizontal triangle is \eqref{equ:triangle}.
\[
  \xymatrix { 
    0 \ar[r] \ar[d] & c' \ar@{=}[r] \ar[d] & c' \ar[d] \\
    \Sigma^{ -n }( u'' \oplus c'' ) \ar^-{ \Sigma^{ -n }\delta }[r] \ar@{=}[d] & u' \oplus c' \ar[r] \ar[d] & d \ar[d] \\
    \Sigma^{ -n }( u'' \oplus c'' ) \ar_-{ \Sigma^{ -n }\delta' }[r] & u' \ar[r] & d'
            }
\]
Here $\Hom_{ \Db }( c',\cM ) = 0$ for degree reasons, so
any morphism from $d$ to an object of $\cM$ factors through $d'$, so
it is enough to see that there is an $\cM$-preenvelope $d' \rightarrow
m(0)$ with $m(0) \in \cU$.

Since $c'' \in \cC^{ \leqslant -n}$ we have $\Sigma^{ -n }c'' \in \cC^{
  \leqslant 0}$ so we can write $\Sigma^{ -n }c'' = m'' \oplus
\widetilde{c}$ with $m'' \in \cM$ and $\widetilde{c} \in \cC^{ \leqslant -n
}$, whence $\Sigma^{ -n }( u'' \oplus c'' ) = \Sigma^{ -n }u'' \oplus
m'' \oplus \widetilde{c}$.  We have $\Hom_{ \Db }(
\widetilde{c},u' ) = 0$ for degree reasons, so the octahedral axiom
gives a commutative diagram where the rows and columns are
triangles minus the fourth object, the first vertical triangle is
split, and the second horizontal triangle is the third horizontal
triangle from the previous diagram.
\[
  \xymatrix { 
    \widetilde{c} \ar[r] \ar[d] & 0 \ar[r] \ar[d] & \Sigma \widetilde{c} \ar[d] \\
    \Sigma^{ -n }u'' \oplus m'' \oplus \widetilde{c} \ar^-{\Sigma^{ -n }\delta' }[r] \ar[d] & u' \ar[r] \ar@{=}[d] & d' \ar[d] \\
    \Sigma^{ -n }u'' \oplus m'' \ar[r] & u' \ar[r] & d''
            }
\]
Here $\Hom_{ \Db }( \Sigma \widetilde{c},\cM ) = 0$ for degree
reasons, so as above it is enough to see that there is an
$\cM$-preenvelope $d'' \rightarrow m(0)$ with $m(0) \in \cU$.  But
this is true by the third horizontal triangle because $\cU$ has
Property $(E)$.

``$\Leftarrow$'' in Equation \eqref{equ:tors_eq}: Consider the
triangle of Definition \ref{def:E},
\[
  \Sigma^{ -n }u'' \oplus m''
  \stackrel{ \psi'' }{ \rightarrow } u'
  \stackrel{ \psi' } { \rightarrow } e
  \stackrel{ \psi } { \rightarrow } \Sigma^{ -n+1 }u'' \oplus \Sigma m''
\]
with $u',u'' \in \cU$ and $m'' \in \cM$.  We must show that there is
an $\cM$-preenvelope $e \rightarrow u$ with $u \in \cU$.

The subcategory $\cX( \cU )$ of $\cC$ is left closed under
$n$-extensions, and since $u'$ and $u'' \oplus \Sigma^n m''$ are in
$\cX( \cU )$, there is an $( n+2 )$-angle in $\cC$,
\[
  u'
  \stackrel{ \theta' }{ \longrightarrow } x^1
  \longrightarrow y^2
  \longrightarrow \cdots
  \longrightarrow y^n
  \longrightarrow u'' \oplus \Sigma^n m''
  \stackrel{ \Sigma^n \psi'' }{ \longrightarrow } \Sigma^n u',
\]
with $x^1 \in \cX( \cU )$.  By definition of $\cX( \cU )$ we have $x^1
= u \oplus c$ with $u \in \cU$ and $c \in
\cC^{ \leqslant -n}$.  Let $x^1 \stackrel{ \xi }{ \rightarrow } u$ be the projection onto the summand.  It follows from
Lemma \ref{lem:exact2} that $\theta' \circ \psi'' = 0$, so we
get the following commutative diagram.
\[
  \xymatrix { 
    \Sigma^{ -n }u'' \oplus m'' \ar^-{\psi''}[r] \ar@{=}[d] & u' \ar^{\psi'}[r] \ar@{=}[d] & e \ar^{\varphi}[d] \ar^-{\psi}[r] & \Sigma^{ -n+1 }u'' \oplus \Sigma m'' \\
    \Sigma^{ -n }u'' \oplus m'' \ar_-{\psi''}[r] & u' \ar_{\theta'}[r] & x^1 \ar^{\xi}[d] \\
    & & u
            }
\]
We will show that $e \stackrel{ \xi\varphi }{ \longrightarrow }
u$ is an $\cM$-preenvelope.

Let $e \stackrel{ \chi }{ \rightarrow } m$ be a morphism
with $m \in \cM$.  We must show that it factors through $e
\stackrel{ \xi\varphi }{ \longrightarrow } u$.  Since
$\chi\psi' \circ \psi'' = \chi \circ \psi'\psi'' = \chi \circ 0 = 0$,
the dual of Lemma \ref{lem:exact2} implies that there is $x^1
\stackrel{ \alpha }{ \rightarrow } m$ with $\alpha\theta' =
\chi\psi'$.  Hence $( \chi - \alpha\varphi )\psi' = \chi\psi' -
\alpha\theta' = 0$ so there is $\Sigma^{ -n+1 }u'' \oplus \Sigma m''
\stackrel{ \beta }{ \rightarrow } m$ with $\beta\psi =
\chi - \alpha\varphi$.  However, this implies
\begin{equation}
\label{equ:fact}
  \chi = \alpha\varphi
\end{equation}
because $\beta = 0$ since $\Hom_{ \Db }( \Sigma^{ -n+1
}u'' \oplus \Sigma m'' , m ) = 0$.  Namely,
$
  \Hom_{ \Db }( \Sigma^{ -n+1 }u'',m )
  \cong \Ext_{ \Lambda }^{ n-1 }( u'',m ) = 0
$
because $u''$ and $m$ are in the $n$-cluster tilting subcategory
$\cM$, and
$
  \Hom_{ \Db }( \Sigma m'',m ) = 0
$
because $m''$ and $m$ are in $\cM$, hence in $\mod( \Lambda )$.

Now consider $x^1 \stackrel{ \alpha }{ \rightarrow } m$.  Here $x^1 =
u \oplus c$ with $c \in \cC^{ \leqslant -n }$, and $\Hom_{ \Db }( c,m
) = 0$ for degree reasons, so $\alpha$ factors through the projection
$x^1 \stackrel{ \xi }{ \rightarrow } u$.  That is, $\alpha =
\gamma\xi$, and combining with Equation \eqref{equ:fact} shows $\chi =
\gamma \circ \xi\varphi$.  So $\chi$ factors through $\xi\varphi$ as
desired.
\end{proof}

\section{Example: Splitting torsion classes arising from $n$-APR
  tilting modules} 
\label{sec:example1}

Recall that throughout, $n \geqslant 1$ is a fixed integer and we are
working under Setup \ref{set:blanket}.

In classic tilting theory, if $t$ is a tilting module then $\Fac(t)$,
the set of all quotients of modules of the form $t^{ \oplus d }$, is a
torsion class.  If $t$ is a so-called APR tilting module, see
\cite[thm.\ 1.11]{APR} or \cite[exa.\ VI.2.8(c)]{ASS}, then $\Fac(t)$
is splitting in the sense that for each $m \in \mod( \Lambda )$, the
short exact sequence $0 \rightarrow u \rightarrow m \rightarrow v
\rightarrow 0$ with $u \in \Fac(t)$, $v \in \Fac(t)^{ \perp }$ is
split.  We will show an $n$-homological version of this.

\begin{Setup}
\label{set:nAPR}
Let $p$ be a simple projective, non-injective left $\Lambda$-module
and write $\Lambda = p \oplus q$ as left $\Lambda$-modules.  The
corresponding $n$-APR tilting module, introduced in \cite[def.\ 3.1
and obs.\ 4.1(1)]{IO}, is $t = ( \tau^-_n p ) \oplus q$ where
$\tau^-_n = \Tr \Omega^{n-1} \dual$ is the composition of the
transpose, $\Tr$, the $(n-1)$st syzygy in a minimal projective
resolution, $\Omega^{n-1}$, and $k$-linear duality, $\dual$.  See
\cite[sec.\ IV.2]{ASS} for the definition of the transpose.  \hfill
$\Box$
\end{Setup}

For the following theorem, recall that splitting torsion classes were
defined after Theorem \ref{thm:06}.

\begin{Theorem}
\label{thm:Fac}
The full subcategory $\cM \cap \Fac(t)$ is a splitting torsion class
in $\cM$.
\end{Theorem}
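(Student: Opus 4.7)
The plan is to verify the torsion class property of $\cU := \cM \cap \Fac(t)$ by explicitly constructing, for each $m \in \cM$, a split $n$-exact sequence of the required form, then checking the $\Uexact$ condition on its tail.

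First, I would identify $\cU$ concretely as the full subcategory of modules in $\cM$ without any indecomposable summand isomorphic to $p$. The inclusion ``$\subseteq$'' follows from $\Hom_\Lambda(t,p)=0$: both summands of $t = \tau^-_n p \oplus q$ fail to have $p$ in their top (the summand $q$ is a sum of indecomposable projectives different from $p$, with simple tops different from $p$, while $\tau^-_n p$ is non-projective without a $p$-summand by \cite{IO}); since $p$ is simple projective, any nonzero map to $p$ is a split epimorphism and so forces a $p$-summand on the source, whence $\Hom_\Lambda(t,p)=0$ and no quotient of $t^d$ can have a $p$-summand. For the inclusion ``$\supseteq$'' I would invoke the $n$-APR exact sequence $0 \to p \to c_1 \to \cdots \to c_n \to \tau^-_n p \to 0$ from \cite{IO}, whose middle terms lie in $\add(q) \subseteq \add(t)$, in order to replace occurrences of $p$ in a projective presentation of any $m \in \cM$ without a $p$-summand, producing an epimorphism $t^d \twoheadrightarrow m$.

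Next, for any $m \in \cM$ decompose $m = m'' \oplus p^a$ with $m'' \in \cU$ having no $p$-summand, and consider the padded sequence
\[
  0 \longrightarrow m'' \stackrel{\iota}{\longrightarrow} m \stackrel{\pi}{\longrightarrow} p^a \longrightarrow 0 \longrightarrow \cdots \longrightarrow 0 \longrightarrow 0
\]
of length $n+2$, where $\iota$ and $\pi$ are the split inclusion and projection.  Since $0 \to m'' \to m \to p^a \to 0$ is split, the functors $\cM(\widetilde{m},-)$ and $\cM(-,\widetilde{m})$ carry it to split short exact sequences for any $\widetilde{m} \in \cM$, while the padded tail contributes only $\cM(0,\widetilde{m}) = \cM(\widetilde{m},0) = 0$.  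The $n$-kernel and $n$-cokernel conditions of Definition \ref{def:n-abelian} are then immediate, and $\iota$ is visibly monic, so axiom [$n$AB2] gives that the padded sequence is $n$-exact in $\cM$.

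To finish, I would verify that $v^1 = p^a$, $v^2 = \cdots = v^n = 0$ lies in $\Uexact$: for each $u \in \cU$ the condition reduces to $\cM(u,p^a)=0$, equivalently $\Hom_\Lambda(u,p)=0$, which holds because any nonzero map $u \to p$ would be a split surjection (by simplicity and projectivity of $p$), contradicting the absence of a $p$-summand in $u$.  Hence $\cU$ is a torsion class, and since $\iota$ is a split mono by construction, the class is splitting.  The main obstacle, I expect, is the containment ``$\supseteq$'' in the first step, which depends on the specific structure of $n$-APR tilting and the $n$-APR exact sequence from \cite{IO}; once that identification of $\cU$ is in hand, the remainder is a direct computation from the $n$-abelian axioms applied to the padded split short exact sequence.
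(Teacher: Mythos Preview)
Your proposal is correct and follows essentially the same route as the paper: identify $\cU$ with the objects of $\cM$ having no $p$-summand, then pad the split short exact sequence $0 \to m'' \to m \to p^a \to 0$ with zeros and use $\Hom_\Lambda(\cU,p)=0$ for the $\Uexact$ condition. One remark: the inclusion ``$\supseteq$'' you flag as the main obstacle is in fact easier than your sketch suggests, since if $m \in \cM$ has no $p$-summand then its projective cover already lies in $\add(q) \subseteq \add(t)$ (a copy of $p$ in $\mathrm{top}(m)$ would give a surjection $m \to p$, split because $p$ is projective); the paper handles this step by citing lemmas from \cite{IO}, while for ``$\subseteq$'' it cites \cite{IO} directly for $\Hom_\Lambda(t,p)=0$ rather than arguing via tops as you do.
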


\begin{proof}
We have $\cM = \add(M)$ for a basic $n$-cluster tilting module $M
\in \mod( \Lambda )$.  The indecomposable projective module $p$ is a
direct summand of $M$, see \cite[def.\ 1.1]{I1} or
Definition \ref{def:cluster_tilting}, so we can write
$M = M' \oplus p$ where $p$ is not a direct summand of $M'$.
We first remark that
\[
  \cM \cap \Fac(t) = \add( M' ).
\]
The inclusion $\supseteq$ follows from \cite[lem.\ 4.4]{IO} and the
second bullet in \cite[lem.\ 3.5]{IO}, and the inclusion $\subseteq$
follows from \cite[prop.\ 3.3(2)]{IO} which implies
\[
  \Hom_{ \Lambda }( t,p ) = 0,
\]
whence the indecomposable $p$ is not in $\cM \cap \Fac(t)$.

To complete the proof, we show that $\add( M' )$ is a splitting
torsion class.  For $m \in \cM$ there is a split short exact sequence
$0 \rightarrow u \rightarrow m \rightarrow v \rightarrow 0$ with $u
\in \add( M' )$ and $v \in \add( p )$.  It gives an $n$-exact sequence
$
  0 
  \rightarrow u
  \stackrel{\theta}{\rightarrow} m
  \rightarrow v
  \rightarrow 0
  \rightarrow \cdots
  \rightarrow 0
  \rightarrow 0
$
in $\cM$ with $\theta$ a split monomorphism.  Here $v \rightarrow 0
\rightarrow \cdots \rightarrow 0$ is in $\add( M' )\mbox{\rm -exact}$
because $\Hom_{ \Lambda }( M',p ) = 0$; this follows from the first
two displayed equations of the proof.
\end{proof}

\begin{Remark}
The $n$-APR tilting module $t$ is in $\cM$ by \cite[thm.\ 4.2(1)]{IO},
it is a tilting module in the sense of Miyashita \cite[p.\ 113]{M} by
\cite[thm.\ 3.2]{IO}, and $\gldim \End( t ) = n$ by \cite[prop.\
3.6]{IO}.  A more careful analysis shows that if $t$ satisfies these
conditions, then Theorem \ref{thm:Fac} remains true.  However, it is
presently unclear what can be said if $t \in \cM$ is a tilting module
with $\gldim \End( t ) \geqslant n+1$.
\hfill $\Box$
\end{Remark}

\section{Example: A non-splitting torsion class} 
\label{sec:example2}

Recall that throughout, $n \geqslant 1$ is a fixed integer and we are
working under Setup \ref{set:blanket}.  We keep Setup \ref{set:nAPR}
in place.

\begin{Theorem}
The full subcategory $\cU = \add( p )$ is a non-splitting torsion
class in $\cM$.
\end{Theorem}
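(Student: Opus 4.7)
The plan is two-fold: first verify that $\cU = \add(p)$ is a torsion class by checking Property $(E)$ via Theorem \ref{thm:n-abelian_torsion}, then exhibit an object $m \in \cM$ whose $\cU$-cover fails to be a split monomorphism.

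For the torsion class property, Theorem \ref{thm:n-abelian_torsion} reduces the task to Property $(E)$, which by Lemma \ref{lem:EeqF} is equivalent to Property $(F)$. Consider therefore a triangle $\Sigma^{-n}u'' \rightarrow b' \rightarrow f \rightarrow \Sigma^{-n+1}u''$ in $\Db( \mod\,\Lambda )$ with $u'' \in \cU$ and $b'$ a quotient in $\mod(\Lambda)$ of some $u' \in \cU$. Since $p$ is simple, every non-zero quotient of $u' = p^{\oplus d}$ is again a direct sum of copies of $p$, so $b' \in \cU \cup \{0\}$. Since $u'' \in \add(p)$ is projective and $n \geqslant 1$, $\Hom_{\Db}( \Sigma^{-n}u'',b' ) \cong \Ext^{n}_{\Lambda}( u'',b' ) = 0$, so the triangle splits and $f \cong b' \oplus \Sigma^{-n+1}u''$. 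If $n=1$, then $f \cong b' \oplus u'' \in \cU$ and the identity is an $\cM$-preenvelope. If $n \geqslant 2$, projectivity of $u''$ gives $\Hom_{\Db}( \Sigma^{-n+1}u'',\widetilde{m} ) \cong \Ext^{n-1}_{\Lambda}( u'',\widetilde{m} ) = 0$ for every $\widetilde{m} \in \cM \subseteq \mod( \Lambda )$, so the projection $f \rightarrow b'$ is an $\cM$-preenvelope with target in $\cU$. This verifies Property $(F)$, so $\cU$ is a torsion class.

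For non-splitting, since $p$ is a non-injective indecomposable in $\cM$, the theory of $n$-almost split sequences in $\cM$ \cite{I1} furnishes an $n$-exact sequence
\[
  0 \rightarrow p \stackrel{\iota}{\rightarrow} c^1 \rightarrow c^2 \rightarrow \cdots \rightarrow c^n \rightarrow \tau^{-}_n p \rightarrow 0
\]
in $\cM$ in which $\iota$ is not a split monomorphism. Decompose $c^1 = p^{\oplus a} \oplus c^{1\prime}$ so that $p$ is not a direct summand of $c^{1\prime}$, and let $\iota' \colon p \rightarrow c^{1\prime}$ be the component of $\iota$. If $\iota' = 0$, then $\iota$ factors through $p^{\oplus a}$; since $\End(p) = k$ and $\iota$ is monic, some component $p \rightarrow p$ is an isomorphism, so the corresponding projection splits $\iota$, a contradiction. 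Hence $\Hom( p,c^{1\prime} ) \neq 0$ and in particular $c^{1\prime} \neq 0$. Setting $m := c^{1\prime} \in \cM$, a $\cU$-cover of $m$ is the evaluation map $p^{\oplus \dim \Hom(p,m)} \rightarrow m$ assembled from a basis of $\Hom(p,m)$. If this cover were a split monomorphism, $p$ would be a direct summand of $m$, contradicting the construction of $c^{1\prime}$. By Lemma \ref{lem:precovers}(i) any monomorphism $\theta$ appearing in a torsion $n$-exact sequence for $m$ must be a $\cU$-cover, so no such $\theta$ can be split, and $\cU$ is therefore non-splitting.

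The main obstacle I anticipate is the possibility in Step 2 that $p$ appears as a direct summand of the middle term $c^1$ of the $n$-almost split sequence; this is circumvented by passing to the complementary summand $c^{1\prime}$, whose non-vanishing and the non-vanishing of $\Hom(p, c^{1\prime})$ both follow from the non-split nature of the source map $\iota$ combined with $\End(p) = k$.
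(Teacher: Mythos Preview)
Your verification that $\cU$ is a torsion class via Property $(F)$ matches the paper's argument essentially line for line.

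For non-splitting, your route is correct but differs from the paper's. The paper takes $m = E(p)$, the injective envelope of $p$, which lies in $\cM$ because $n$-cluster tilting subcategories contain all injectives. The $\cU$-cover $\theta \colon u \to E(p)$ is nonzero since $\Hom_\Lambda(p, E(p)) \neq 0$; if $\theta$ were split then $u = p^{\oplus i}$ would be a summand of the injective module $E(p)$, forcing $p$ itself injective and contradicting Setup~\ref{set:nAPR} directly. Your argument via the $n$-almost split sequence of $p$ is valid but heavier: it invokes Iyama's higher Auslander--Reiten theory for the existence of that sequence, and then needs the decomposition $c^1 = p^{\oplus a} \oplus c^{1\prime}$ together with the $\End(p) = k$ argument to locate a suitable $m$. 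The paper's approach is more self-contained, using only the defining properties of an $n$-cluster tilting subcategory, and the contradiction it reaches lines up cleanly with the standing non-injectivity hypothesis on $p$.
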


\begin{proof}
By Theorem \ref{thm:n-abelian_torsion} and Lemma \ref{lem:EeqF}, it is
enough to show that $\cU$ has Property $(F)$.

Since $p$ is indecomposable, each module $u \in \cU$ has the form $u
= p^{ \oplus i }$.  Since $p$ is simple, each quotient of $u$ in
$\mod( \Lambda )$ has the form $p^{ \oplus j }$.  Hence, in the
triangle
$
  \Sigma^{ -n }u''
  \stackrel{ \varphi }{ \rightarrow } b'
  \rightarrow f
  \rightarrow \Sigma^{ -n+1 }u''
$
of Definition \ref{def:F}, 
we have $u'' = p^{ \oplus i }$ and $b' = p^{ \oplus j }$.  Since $p$
is projective and $n \geqslant 1$ we have $\varphi = 0$ so
$
  f = b' \oplus \Sigma^{ -n+1 }u''
    = p^{ \oplus j } \oplus \Sigma^{ -n+1 }p^{ \oplus i }.
$

If $n = 1$ then $f = p^{ \oplus ( i+j ) }$.  This is an object of $\cU$,
so $f$ has an $\cM$-preenvelope in $\cU$, namely $f$ itself.  If $n
\geqslant 2$ then $\Hom_{ \Db }( \Sigma^{ -n+1 }p^{ \oplus i },\cM ) =
0$ for degree reasons, so to get an $\cM$-preenvelope of $f$ we can
take one of $p^{ \oplus j }$.  But $p^{ \oplus j }$ is an object of
$\cU$, so $f$ has an $\cM$-preenvelope in $\cU$, namely $p^{ \oplus j
}$.  This verifies Property $(F)$.

Finally, to show that $\cU$ is non-splitting, let $E( p )$ be the
injective envelope of $p$ in $\mod( \Lambda )$.  Then $E( p ) \in
\cM$, see \cite[def.\ 1.1]{I1} or Definition
\ref{def:cluster_tilting}, so there is an $n$-exact sequence
\[
  0 \rightarrow u \stackrel{ \theta }{ \rightarrow } E( p )
    \rightarrow v^1 \rightarrow \cdots \rightarrow v^n \rightarrow 0
\]
as in Definition \ref{def:torsion}.  By Lemma \ref{lem:precovers}(i)
the morphism $\theta$ is a $\cU$-cover so must be non-zero since the
object $p \in \cU$ has a non-zero morphism $p \rightarrow E( p )$.  If
$\cU$ were splitting then we could assume that $\theta$ was split
whence $u$ would be injective.  Since $u = p^{ \oplus i }$, this would
show $p$ injective contradicting Setup \ref{set:nAPR}.
\end{proof}

\medskip
\noindent
{\bf Acknowledgement.}  We thank the referees for their careful
reading of the paper and their detailed comments and suggestions which
have led to significant improvements.

\end{document}